\setlist{leftmargin=1cm}
\newtheorem{theorem}{Theorem}
\newtheorem{lemma}[theorem]{Lemma}
\newtheorem{remark}[theorem]{Remark}
\newtheorem{proposition}[theorem]{Proposition}
\newtheorem{example}[theorem]{Example}
\theoremstyle{definition}
\newtheorem{definition}[theorem]{Definition}
\newcommand{\colvec}[2]{\Bigl[ \begin{smallmatrix} #1 \\ #2 \end{smallmatrix} \Bigr]}
\newcommand{\rowvec}[2]{\begin{bsmallmatrix} #1 & #2 \end{bsmallmatrix}}
\newcommand{\Rel}{\mathbf{Rel}}
\newcommand{\PFn}{\mathbf{PFn}}
\newcommand{\PInj}{\mathbf{PInj}}
\newcommand{\VecRShort}{\mathbf{Vec}_{\mathbb{R}}\mathbf{Short}}
\newcommand{\C}{\mathbb{C}}
\begin{document}
\title{Partial linearity in categories}

\begin{abstract}
In this paper we generalise the notion of linearity (in the sense of Lawvere) to a category $\C$ equipped with a compatible sum structure $\oplus$ and product structure $\otimes$. In this context, any morphism $f \colon X_1 \oplus \ldots \oplus X_n \to Y_1 \otimes \ldots \otimes Y_n$ has a unique $n \times m$ matrix presentation, but a morphism for a given matrix does not necessarily exist. We define $\oplus$ and $\otimes$ to be compatible if there exists a natural transformation $i \colon X_1 \oplus X_2 \to X_1 \otimes X_2$ with matrix presentation the identity and define $\C$ to be \emph{partially linear} if such an $i$ is invertible. We establish a coherence theorem for partially linear categories. We generalise the notion of a \emph{central morphism} to this setting, and show that the central morphisms of a partially linear category admit enrichment over monoids.
\end{abstract}

\author{Roy Ferguson}
\address{Department of Mathematical Sciences, Stellenbosch University, South Africa}\address{National Institute for Theoretical and Computational Sciences (NITheCS), South Africa}
\email{royf326@gmail.com}

\author{Zurab Janelidze}
\address{Department of Mathematical Sciences, Stellenbosch University, South Africa}\address{National Institute for Theoretical and Computational Sciences (NITheCS), South Africa}
\email{zurab@sun.ac.za}

\maketitle

\section{Preliminaries}

A monoidal sum structure \cite{janelidze} $\oplus$ on a category $\C$ is defined to be a monoidal structure $(\oplus, 0, \alpha_{\oplus}, \lambda_{\oplus}, \rho_{\oplus})$ where the monoidal unit is an initial object 0 and the standard binary inclusions 
\[
    \xymatrix{
        X \oplus 0 \ar[r]^{1_X \oplus !} & X \oplus Y & 0 \oplus Y \ar[l]_{! \oplus 1_Y} \\
        X \ar[u]^{\rho_X} \ar[ru]_{\iota_1} & & Y \ar[u]_{\lambda_Y} \ar[lu]^{\iota_2}
    }
\]
are jointly epimorphic. A product structure is defined dually. We will drop the subscript $\oplus$ when there is no confusion. Associated to each sum structure $\oplus$ is a cover relation $\sqsubset$ (a binary relation on the class of morphisms of $\C$) that essentially measures how far the sum structure is from coproduct. Here $f \sqsubset g$ if they have a common codomain, and there exists some (necessarily unique) dotted morphism making the following diagram commute.
\[
    \xymatrix{
        X_1 \ar[r]^-{\iota_1} \ar[dr]_f & X_1 \oplus X_2 \ar@{.>}[d] & X_2 \ar[l]_-{\iota_2} \ar[dl]^g \\
        & Y & 
    }
\]

We review some necessary terminology and results on monoidal categories, as taken from \cite{maclane} Chapter VII. We take the symbol for monoidal unit $0$ to be a \emph{word} of length 0 while $\_$ is a word of length $1$. For any words $v,w$ of length $m$ and $n$, respectively, $v \oplus w$ is a word of length $m+n$. In general, a word of length $n$ has $n$ occurrences of $\_$ and any number of occurrences of $0$. Now any word $w$ of length $n$ determines a functor $w_\C \colon \C^n \to \C$, where on objects a tuple $(X_1, \ldots, X_n)$ gets sent to the word $w$ with the $i^{\text{th}}$ occurrence of $\_$ replaced with $X_i$. On morphisms, a tuple $(f_1, \ldots, f_n)$ gets sent to $w$ with the $i^{\text{th}}$ occurrence of $\_$ replaced with $f_i$. We use the term \emph{word} to refer to the functor defined by a symbolic word. Then $\alpha$ is a natural transformation from $(u \oplus (v \oplus w))_\C$ to $((u \oplus v) \oplus w)_\C$, $\lambda$ is a natural transformation from $(0 \oplus w)_\C$ to $w_\C$, $\rho$ is a natural transformation from $(w \oplus {0})_\C$ to $w_\C$, and the identity natural transformation goes from $w_\C$ to $w_\C$ for any words $u,v,w$. These natural transformations and their inverses are \emph{canonical}, and for any two canonical natural transformations $\beta, \gamma$, the vertical composite $\beta \circ \gamma$ is canonical if it is defined and $\beta \oplus \gamma$ is canonical. These are all possible canonical natural transformations for the monoidal structure $\oplus$. Observe that the source and target of any canonical natural transformation must be determined by words with the same length. Likewise we will denote by ``canonical morphism'' both symbolic canonical morphisms as well as the natural transformations they define.

The coherence theorem for monoidal categories then states that, fixing a natural number $n$, any diagram with vertices $w_\C$, where the $w$'s are words of length $n$, and edges canonical natural transformations commutes. In particular this means that for any tuple of objects $\underline{X} = (X_1, \ldots, X_n)$ of $\C$, any two morphisms
\[\xymatrix{
    v_\C(\underline{X}) \ar@<0.3em>[rr]^{\underline{X}} \ar@<-0.3em>[rr]_{\gamma_{\underline{X}}} & & w_\C(\underline{X})
}\]
are equal whenever they are components of canonical natural transformations. It is also the case that for any two words $v$ and $w$ of the same length, there exists a canonical isomorphism $v_\C \xlongrightarrow[\sim]{\boldsymbol{\cdot}} w_\C$. In particular, every word of length $n$ is isomorphic to its \emph{standard form}, which is without units, and all association to the right.

We can define an $n$-fold $\oplus$-sum of the objects $\underline{X}$ to be any $\oplus$-word of length $n$ evaluated at $\underline{X}$. We need to express the notion of an $i^{\text{th}}$ inclusion $\iota_i \colon X_i \to w(\underline{X})$. Let us denote the tuple that has $X_i$ at position $i$ and 0 everywhere else by $\underline{0_i{(X_i)}}$ and denote the tuple with $1_{X_i}$ at position $i$ and $! \colon 0 \to X_j$ at all other positions $j$ by $\underline{!_i(X_i)}$. Then we can define our inclusion as the composite 
\[\xymatrixcolsep{2cm}\xymatrix{
    X_i \ar[r]^{\cong} \ar@/_1cm/[rr]_{\iota_i} & w(\underline{0_i{(X_i)}}) \ar[r]^-{w(\underline{!_i(X_i)})} & w(\underline{X})
}\]
where the top-left morphism is the unique canonical morphism that exists since $X_i$ and $w(\underline{0_i{(X_i)}})$ are the same length. Then any two representatives of a sum are canonically isomorphic in a way that commutes with inclusions. Moreover, for any word $w$, all $i^{\text{th}}$ inclusions form the components of a natural transformation $\iota_i \colon p_i \xlongrightarrow{\boldsymbol{\cdot}} w_\C$ where $p_i$ is the $i^{\text{th}}$ projection functor $\C^n \to \C$, and these are the unique such natural transformations where the component at ${\underline{0_i(X_i)}}$ is canonical. It is easy to see that for any $n$-fold $\oplus$-sum, $w(\underline{X})$, the $n$ inclusions to it are jointly epimorphic. 

Suppose now a category is equipped with a sum structure $\oplus$ and a product structure $\otimes$. Then any morphism $f$ from an $n$-fold sum to an $m$-fold product has an $m \times n$ matrix presentation
\[
\left[ \begin{array}{cccc}
\pi_1 f \iota_1 & \pi_1 f \iota_2 & \ldots & \pi_1 f \iota_n \\
\pi_2 f \iota_1 & \pi_2 f \iota_2 & \ldots & \pi_2 f \iota_n \\
\vdots & \vdots & & \vdots \\
\pi_m f \iota_1 & \pi_m f \iota_2 & \ldots & \pi_m f \iota_n
\end{array} \right]
\]
Now, unlike the case for coproducts and products, a given such matrix may not have a morphism it represents. But, when such a morphism exists, it is unique. This follows from the fact that inclusions are jointly epimorphic and projections are jointly monomorphic.

\section{Linearisers}

Suppose we have a natural transformation $i \colon \oplus \xlongrightarrow{\boldsymbol{\cdot}} \otimes$ from a sum structure to a product structure on a category $\C$. At the level of components we have a morphism $i_{A,B} \colon A \oplus B \to A \otimes B$ for every pair of objects $A,B \in \C$. A surprising, but essentially immediate consequence is that $\C$ is pointed!

\begin{proposition}

    Suppose $\C$ is a category equipped with a sum structure $(\oplus,0, \alpha_\oplus,\lambda_\oplus,\rho_\oplus)$ and a product structure $(\otimes,1,\alpha_\otimes,\lambda_\otimes,\rho_\otimes)$ where there exists a natural transformation $i \colon \oplus \to \otimes$. Then $\C$ is pointed. 

\end{proposition}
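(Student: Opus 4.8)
The plan is to recognise that $\C$ being pointed amounts to the initial object $0$ of the sum structure and the terminal object $1$ of the product structure being isomorphic, so that they jointly furnish a zero object. Since $0$ is initial there is already a unique morphism $a \colon 0 \to 1$, and every endomorphism of $0$ equals $1_0$; dually, every endomorphism of $1$ equals $1_1$. Hence it will suffice to exhibit a single morphism $b \colon 1 \to 0$: its two composites with $a$ are then forced to be the identities by the universal properties of $0$ and $1$, making $a$ an isomorphism.

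First I would locate the component of $i$ that relates an object isomorphic to $1$ with one isomorphic to $0$. The right choice is $i_{0,1} \colon 0 \oplus 1 \to 0 \otimes 1$. Because $0$ is the $\oplus$-unit, $\lambda_{\oplus}$ gives an isomorphism $0 \oplus 1 \cong 1$, and because $1$ is the $\otimes$-unit, $\rho_{\otimes}$ gives an isomorphism $0 \otimes 1 \cong 0$. Composing,
\[
1 \xrightarrow{\ \lambda_{\oplus}^{-1}\ } 0 \oplus 1 \xrightarrow{\ i_{0,1}\ } 0 \otimes 1 \xrightarrow{\ \rho_{\otimes}\ } 0
\]
produces the desired morphism $b \colon 1 \to 0$. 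The symmetric choice $i_{1,0}$, using $\rho_\oplus$ and $\lambda_\otimes$, works equally well. Notably, only this one component of $i$ is needed; naturality plays no role.

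Having both $a$ and $b$, I would finish by the argument sketched above: the composite $b \circ a$ lies in $\mathrm{Hom}(0,0)$, a singleton since $0$ is initial, so it equals $1_0$; likewise $a \circ b$ lies in $\mathrm{Hom}(1,1)$, a singleton since $1$ is terminal, so it equals $1_1$. Thus $a$ is an isomorphism, $0 \cong 1$ is simultaneously initial and terminal, i.e.\ a zero object, and therefore $\C$ is pointed.

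The only real obstacle is conceptual rather than computational: spotting that the single component $i_{0,1}$ (equivalently $i_{1,0}$) bridges the two units once they are collapsed by the respective unit isomorphisms, which is presumably the ``surprising'' feature the authors allude to. After that, the result is a formal consequence of the universal properties of $0$ and $1$, and none of the coherence machinery from the preliminaries is required.
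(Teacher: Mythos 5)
Your proposal is correct and follows essentially the same route as the paper: the paper's proof is exactly your ``symmetric choice,'' namely the composite $1 \xrightarrow{\rho_{\oplus}^{-1}} 1 \oplus 0 \xrightarrow{i_{1,0}} 1 \otimes 0 \xrightarrow{\lambda_{\otimes}} 0$, using a single component of $i$ bracketed by unitors. You additionally spell out the (correct) closing step that the paper leaves implicit, that a morphism $1 \to 0$ forces $0 \cong 1$ by initiality and terminality, yielding a zero object.
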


\begin{proof}

    The following composite is a morphism from 1 to 0.
    \[\xymatrix{
        1 \ar[r]^-{\rho^{-1}_{\oplus}} & 1 \oplus 0 \ar[r]^-{i_{1,0}} & 1 \otimes 0 \ar[r]^-{\lambda_{\otimes}} & 0
    }\]
 
\end{proof}

In what follows, we will refer to the unique morphism from 0 to 1 as $j$. Otherwise, we adopt the convention that a zero morphism from $X$ to $Y$ is denoted by $z_{X,Y}$ or just $z$ if the context is clear.

\begin{definition}
    Suppose $i \colon \oplus \to \otimes$ is a natural transformation from sum structure to product structure. We will say that \emph{$i$ is compatible with $\rho$} if the following diagram commutes for any $A \in \C$.\\
    \[\xymatrixcolsep{5pc}\xymatrix{
        A  & A \oplus 0 \ar[d]^{i_{A,0}} \ar[l]_{\rho} \\
        & A \otimes 0 \ar[d]^{1 \otimes j} \\
        & A \otimes 1 \ar[luu]^{\rho}
    }\]
    We will say that \emph{$i$ is compatible with $\lambda$} if the following diagram commutes for any $B \in \C$.\\
    \[\xymatrixcolsep{5pc}\xymatrix{
        0 \oplus B \ar[r]^{\lambda}  \ar[d]_{i_{0,B}} & B  \\
         0 \otimes B \ar[d]_{j \otimes 1} \\
         1 \otimes B \ar[uur]_{\lambda}
    }\]
    If $i$ is compatible with $\rho$ and $\lambda$ we will call $i$ a \emph{prelineariser}. If $i$ is moreover invertible, we will call it a \emph{lineariser}.
\end{definition}

\begin{remark}
    Note that since $i$ is natural, compatibility with $\rho$ and $\lambda$ are equivalent, respectively, to requiring that the following diagrams commute:
    \[\xymatrixcolsep{5pc}\xymatrix{
        A  & A \oplus 0 \ar[l]_{\rho} \ar[d]^{1 \oplus j} \\
        & A \oplus 1 \ar[d]^{i_{A,1}} \\
        & A \otimes 1 \ar[luu]^{\rho}
    }\]
    and
    \[\xymatrixcolsep{5pc}\xymatrix{
        0 \oplus B \ar[r]^{\lambda} \ar[d]_{j \oplus 1} & B  \\
         1 \oplus B \ar[d]_{i_{1,B}} \\
         1 \otimes B \ar[ruu]_{\lambda}
    }\]
    
\end{remark}

The following will be helpful.

\begin{lemma}\label{compat from i}
    If for all $A \in \C$ we have that $\pi_1 i_{A,0} \iota_1 = 1_A$ then $i$ is compatible with $\rho$. Similarly, if for all $B \in \C$ we have that $\pi_2 i_{0,B} \iota_2 = 1_B$ then $i$ is compatible with $\lambda$.
\end{lemma}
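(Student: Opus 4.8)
The plan is to rewrite the binary inclusion and projection occurring in the hypothesis in terms of the unitors, after which each implication collapses to a one-step cancellation. The point is that in the degenerate situations $A \oplus 0$ and $A \otimes 0$ the standard $\iota_1$ and $\pi_1$ coincide (up to the padding by $j$) with the unitors themselves.

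First I would unwind $\iota_1 \colon A \to A \oplus 0$. By the definition of the standard binary inclusion it factors as $A \xrightarrow{\rho_\oplus^{-1}} A \oplus 0 \xrightarrow{1_A \oplus\, !} A \oplus 0$ with $! \colon 0 \to 0$; since $0$ is initial $! = 1_0$, the second leg is $1_{A \oplus 0}$, and hence $\iota_1 = \rho_\oplus^{-1}$. Dually I would unwind $\pi_1 \colon A \otimes 0 \to A$: it factors as $A \otimes 0 \xrightarrow{1_A \otimes\, !} A \otimes 1 \xrightarrow{\rho_\otimes} A$ with $! \colon 0 \to 1$, and since $1$ is terminal this $!$ is the unique morphism $0 \to 1$, namely $j$, so $\pi_1 = \rho_\otimes \circ (1_A \otimes j)$.

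Substituting these into the hypothesis $\pi_1 i_{A,0} \iota_1 = 1_A$ gives $\rho_\otimes \circ (1_A \otimes j) \circ i_{A,0} \circ \rho_\oplus^{-1} = 1_A$. As $\rho_\oplus^{-1}$ is an isomorphism, this is equivalent to $\rho_\otimes \circ (1_A \otimes j) \circ i_{A,0} = \rho_\oplus$, which is exactly the triangle defining compatibility with $\rho$. The $\lambda$ statement is entirely symmetric: one checks that $\iota_2 \colon B \to 0 \oplus B$ equals $\lambda_\oplus^{-1}$ (again using $!_{0 \to 0} = 1_0$) and that $\pi_2 \colon 0 \otimes B \to B$ equals $\lambda_\otimes \circ (j \otimes 1_B)$ (using $!_{0 \to 1} = j$), and the same cancellation converts $\pi_2 i_{0,B} \iota_2 = 1_B$ into the defining triangle for compatibility with $\lambda$.

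The only delicate point, and it is bookkeeping rather than a genuine obstacle, is the identification of these degenerate inclusions and projections with the unitors; it rests entirely on the initiality of $0$ and terminality of $1$ forcing the padding morphisms to be identities or $j$. Once that is settled, the lemma is pure cancellation, requiring neither the naturality of $i$ nor the coherence theorem.
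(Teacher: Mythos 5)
Your proof is correct: unwinding the degenerate standard inclusion to $\iota_1=\rho_\oplus^{-1}$ (via $!_{0\to 0}=1_0$) and the degenerate standard projection to $\pi_1=\rho_\otimes\circ(1_A\otimes j)$ (via $!_{0\to 1}=j$), and then cancelling the isomorphism $\rho_\oplus^{-1}$, turns the hypothesis $\pi_1 i_{A,0}\iota_1=1_A$ precisely into the defining triangle $\rho_\otimes\circ(1\otimes j)\circ i_{A,0}=\rho_\oplus$, with the $\lambda$ case symmetric. The paper states this lemma without proof, but your argument is exactly the intended one, and your closing observation that neither naturality of $i$ nor coherence is needed is accurate.
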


\begin{theorem}\label{Thoerem: monoidal-sums: transformer iff identity matrix}
    A family of morphisms $(i_{A,B} \colon A \oplus B \to A \otimes B)_{(A,B) \in \C \times \C}$ is the components of a prelineariser $i \colon \oplus \to \otimes$ if and only if the unique matrix decomposition of each $i_{A,B}$ is given by
    \begin{equation*}
        i_{A,B} = \left[ \begin{array}{cc}
           \pi_1 i_{A,B} \iota_1   &   \pi_1 i_{A,B} \iota_2\\
           \pi_2 i_{A,B} \iota_1   &   \pi_2 i_{A,B} \iota_2
        \end{array} \right]
        = \left[ \begin{array}{cc}
           1_A   &   z_{B,A} \\
           z_{A,B}   &   1_B
        \end{array} \right]
    \end{equation*}
\end{theorem}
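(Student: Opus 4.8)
The plan is to treat the two implications separately. The backward direction requires no new computation: the asserted matrix identity is assumed to hold for every pair of objects, so evaluating it at $(A,0)$ gives $\pi_1 i_{A,0}\iota_1 = 1_A$ for all $A$, and evaluating it at $(0,B)$ gives $\pi_2 i_{0,B}\iota_2 = 1_B$ for all $B$. These are exactly the two hypotheses of Lemma~\ref{compat from i}, whence $i$ is compatible with both $\rho$ and $\lambda$ and is therefore a transformer.

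For the forward direction I assume $i$ is a transformer and compute the four entries. First I would expand the inclusions and projections through the monoidal units: $\iota_1$ factors as $A \to A\oplus 0 \to A\oplus B$ (inverse right unitor followed by $1_A\oplus\, !$), and $\pi_1$ factors as $A\otimes B \to A\otimes 1 \to A$ (the map $1_A\otimes\, !$ followed by $\rho_\otimes$), with the symmetric expansions for $\iota_2$ and $\pi_2$. The one structural tool is naturality of $i$ at the pair $(1_A,\,!\colon 0\to B)$, namely $i_{A,B}\circ(1_A\oplus\,!) = (1_A\otimes\,!)\circ i_{A,0}$, used alongside the bifunctoriality (interchange law) of $\otimes$.

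For the entry $(1,1)$, substituting the unit expansions into $\pi_1 i_{A,B}\iota_1$ and pushing the naturality square through reduces it to $\rho_\otimes\circ(1_A\otimes j)\circ i_{A,0}\circ\rho_\oplus^{-1}$, the map $j\colon 0\to 1$ appearing as the composite of the unique arrows $0\to B\to 1$. Compatibility with $\rho$ states precisely that $\rho_\otimes\circ(1_A\otimes j)\circ i_{A,0} = \rho_\oplus$, so this collapses to $\rho_\oplus\circ\rho_\oplus^{-1} = 1_A$; the entry $(2,2)$ is the mirror image, using compatibility with $\lambda$. For the off-diagonal entry $(2,1)$, the same naturality rewriting of $\pi_2 i_{A,B}\iota_1$, after an application of interchange, produces a composite that factors through $1\otimes 0$. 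Since $\lambda_\otimes$ makes $1\otimes 0$ initial and $\C$ is pointed, $1\otimes 0$ is a zero object, so any morphism through it is the zero morphism and the entry equals $z_{A,B}$. Symmetrically, $\pi_1 i_{A,B}\iota_2$ factors through $0\otimes 1\cong 0$ and equals $z_{B,A}$.

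I expect the difficulty to be bookkeeping rather than conceptual: keeping straight the directions of the unitors and of the unique maps into and out of the units, and arranging the interchange law so that the off-diagonal composites genuinely land on $1\otimes 0$ and $0\otimes 1$. The only point that needs a separate check is that these two objects really are zero objects — they are initial via $\lambda_\otimes$ and $\rho_\otimes$, and terminal because $\C$ is pointed — which is what forces the off-diagonal entries to be zero morphisms.
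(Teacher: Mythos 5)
Since the preprint deliberately omits proofs, there is no argument in the paper to compare yours against; judged on its own, your forward direction is sound. The factorizations $\iota_1=(1_A\oplus\,!)\rho_\oplus^{-1}$ and $\pi_1=\rho_\otimes(1_A\otimes\,!)$, the naturality square at $(1_A,\,!\colon 0\to B)$, the collapse of the unique composite $0\to B\to 1$ to $j$, and the identification of compatibility with $\rho$ as exactly $\rho_\otimes\circ(1_A\otimes j)\circ i_{A,0}=\rho_\oplus$ together do give the $(1,1)$ entry, with $(2,2)$ mirrored; and the off-diagonal composites do factor through $1\otimes 0$ and $0\otimes 1$, which are zero objects since $\lambda_\otimes$ and $\rho_\otimes$ identify them with the initial object $0$ and $0\cong 1$ once the category is pointed. (Note in passing that pointedness is available even before naturality is known: the paper's Proposition uses only the single component $i_{1,0}$ and the unitors, so the symbols $z_{A,B}$ in the theorem's statement make sense for a bare family.)

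The backward direction, however, has a genuine gap. The theorem is stated for a \emph{family} of morphisms $(i_{A,B})$, so ``is the components of a transformer'' includes the assertion that the family is a \emph{natural transformation} $\oplus\to\otimes$; compatibility with $\rho$ and $\lambda$ is only defined, and Lemma~\ref{compat from i} is only stated, for an already-natural $i$. Quoting that lemma therefore presupposes exactly what is missing, and naturality is the real content of this implication (it is the partial-linear analogue of the classical fact that the identity-matrix map $A\sqcup B\to A\times B$ is automatically natural). The gap is fixable by a routine check in the spirit of your computations: given $f\colon A\to A'$ and $g\colon B\to B'$, it suffices to verify
\[
\pi_k\,(f\otimes g)\,i_{A,B}\,\iota_l \;=\; \pi_k\, i_{A',B'}\,(f\oplus g)\,\iota_l \qquad (k,l\in\{1,2\}),
\]
since the inclusions are jointly epimorphic and the projections jointly monomorphic. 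Naturality of the inclusions and projections themselves (e.g.\ $(f\oplus g)\iota_1=\iota_1 f$ and $\pi_1(f\otimes g)=f\pi_1$, both consequences of bifunctoriality and naturality of the unitors) together with the assumed matrix entries reduces the four equations to $f=f$, $g=g$, and $z=z$ twice (using $fz=z=zg$ in a pointed category). With that paragraph added, your evaluation of the matrix at $(A,0)$ and $(0,B)$ plus Lemma~\ref{compat from i} legitimately completes the backward direction.
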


\begin{proof}
We start with the converse implication, and confirm that $(i_{A,B})_{(A,B)\in \C \times \C}$ is a natural transformation. So suppose we have any morphism $(f,g) \colon (A,B) \to (A',B')$ in $\C \times \C$. Given that $i_{A,B}$ and $i_{A',B'}$ have matrix decomposition the identity matrix, it is easy to see that $(f \otimes g)i_{A,B}$ and $i_{A',B'}(f \oplus g)$   both have matrix decomposition
    \[\left[ \begin{array}{cc}
           f   &   z_{B,A'} \\
           z_{A,B'}   &   g
        \end{array} \right]\]
    and so are equal. The fact that $i$ is compatible with $\lambda$ and $\rho$ follows from  Lemma \ref{compat from i}.
    
    For the forward implication, we assume compatibility with $\lambda$ and $\rho$ and show that the components of the two matrices are equal. We begin by showing that $\pi_1 i_{A,B} \iota_1 = 1_A$. Consider the following diagram.
    \[\xymatrixcolsep{1em}\xymatrixrowsep{1em}\xymatrix{
        & A \oplus 0 \ar[rdd]^{1 \oplus !} \ar@/^2pc/[rrrd]^{i_{A,0}} & & & \\
        &  & &  & A \otimes 0 \ar[llddd]^{1 \otimes !} \ar@/^5pc/[lllddddd]^{1 \otimes j} \\
        A \ar[rr]_{\iota_1} \ar[uur]^{\rho^{-1}} & & A \oplus B \ar[dd]_{i_{A,B}} & & \\ \\
        A & & A \otimes B \ar[ddl]^{1 \otimes !} \ar[ll]_{\pi_1} &  & \\
        &  & & & \\
        & A \otimes 1 \ar[luu]^{\rho} & & &
    }\]
    The left-hand triangles commute by definition of inclusion and projection, respectively. The top-right (curvilinear) quadrilateral commutes since $i$ is natural in $B$ while the bottom-right triangle commutes since $\otimes$ is a functor. All in all, $\pi_1 i_{A,B} \iota_1 = \rho (1 \otimes j) i_{A,0} \rho^{-1} = 1_{A} $, where the last equality follows from compatibility of $i$ with $\rho$. By a similar argument, we get that $\pi_2 i_{A,B} \iota_2 = 1_{B}$. 
    
    Now we will show that $\pi_2 i_{A,B} \iota_1 = z_{A,B}$. Consider the following diagram.
    \[\xymatrixrowsep{0.6pc}\xymatrix{
        & & & & 1 \oplus 0 \ar[dd]_{1 \otimes !} \ar@/^3pc/[rrddddd]^{i_{1,0}} & & \\
        & & & *+[o][F-]{4} & & & \\
        & 1 \ar[rrr]^{\iota_1} \ar@/^2pc/[rrruu]^{\rho^{-1}} & & & 1 \oplus B \ar[dddd]_{i_{1,B}} & & \\
        & & *+[o][F]{3} & & & *+[o][F-]{5} & \\
        A \ar[ruu]^{!} \ar[rr]_{\iota_1} & & A \oplus B \ar[rruu]^{! \otimes 1} \ar[dddd]_{i_{A,B}} & & & & \\
        & & & *+[o][F-]{2} & & & 1 \otimes 0 \ar[lld]_{1 \otimes !} \ar[dd]^{\lambda} \\
        & & & & 1 \otimes B \ar[ddr]^{\lambda} & *+[o][F-]{6}  & \\
        & & & & *+[o][F-]{1} & & 0 \ar[ld]^{!} \\
        & & A \otimes B \ar[rruu]^{! \otimes 1} \ar[rrr]_{\pi_2} & & & B &
    }\]
    \textcircled{1} commutes by definition of projection. \textcircled{2} commutes because $i$ is natural in $A$. \textcircled{3} commutes because $\iota_1$ is natural. \textcircled{4} commutes by definition of inclusion. \textcircled{5} commutes since $i$ is natural in $B$. \textcircled{6} commutes since $\lambda$ is natural. Here we have that $\pi_2 i_{A,B} \iota_1$ factors through 0 object and thus must be $z_{A,B}$. By a similar argument we get that $\pi_1 i_{A,B} \iota_2 = z_{B,A}$.
\end{proof}

We next set about to prove our precursory coherence result, the main part of which states that any canonical morphism from an $n$-fold sum to an $n$-fold product has presentation the identity matrix. We will achieve this by isolating each pair of objects $X_i,X_j$, reducing to the $2 \times 2$ case, and showing that the $i,j$-entry in our $n \times n$ matrix is the identity morphism if $i = j$ and zero otherwise. For this we first must extend our definition of word to include all combinations of $\oplus$-words and $\otimes$ words. 

\begin{definition}
    Given a prelinear category $\C$, we define a \emph{$\oplus \otimes$-word} to be 
    \begin{itemize}
        \item Any $\oplus$-word,
        \item any $\otimes$-word,
        \item $w_1 \oplus w_2$ where $w_1,w_2$ are $\oplus\otimes$-words
        \item $w_1 \otimes w_2$ where $w_1,w_2$ are $\oplus\otimes$-words
    \end{itemize}
\end{definition}

We will refer to a $\oplus\otimes$-word simply as a word if the context is clear. The length of a word is again the number of occurrences of $\_$ in it. We must likewise extend our definition of canonical morphism to include those associated to $\oplus$, those associated to $\otimes$, as well as $i$ and $j$. 

\begin{definition}
    Given a prelinear category $\C$, let us call any occurrence of $i$, $j$, $j^{-1}$, an identity morphism, or any unitor or associator or inverse of unitor or associator for $\oplus$ or $\otimes$ an \emph{atomic canonical morphism}. We then say that the following morphisms are exactly those \emph{canonical with respect to $\oplus$ and $\otimes$}.
    \begin{itemize}
        \item Any atomic canonical morphism.
        \item  $\beta \circ \gamma$ where $\beta$ and $\gamma$ are composable and canonical with respect to $\oplus$ and $\otimes$.
        \item  $\beta \oplus \gamma$ where $\beta$ and $\gamma$ are canonical with respect to $\oplus$ and $\otimes$.
        \item  $\beta \otimes \gamma$ where $\beta$ and $\gamma$ are canonical with respect to $\oplus$ and $\otimes$.
    \end{itemize}
    We say that a morphism is \emph{basic canonical with respect to $\oplus$ and $\otimes$} if it is expressible as $w(\beta)$ where $w$ is some word and $\beta$ is atomic canonical. 
\end{definition}

In the definition above, we include, for example, in the definition of associator for $\oplus$ those $\alpha \colon w_1 \oplus (w_2 \oplus w_3) \to (w_1 \oplus w_2) \oplus w_3$ where $w_1, w_2, w_3$ are arbitrary $\oplus\otimes$-words. We will refer to a morphism canonical with respect to $\oplus$ and $\otimes$ simply as canonical when the context is clear. 

\begin{lemma}
    Any morphism canonical with respect to $\oplus$ and $\otimes$ can be expressed as the vertical composite of morphisms that are basic canonical with respect to $\oplus$ and $\otimes$.
\end{lemma}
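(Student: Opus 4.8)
The plan is to argue by structural induction on the inductive definition of morphisms canonical with respect to $\oplus$ and $\otimes$, showing at each stage that the morphism is a vertical composite of morphisms that are elementary canonical with respect to $\oplus$ and $\otimes$. The generators $i$ and $j$ are already elementary, since each is of the form $w(\sigma)$ with $w$ the length-one word $\_$ and $\sigma$ equal to $i$ or $j$; hence they are composites of length one. The case $\beta \circ \gamma$ is immediate: concatenating a decomposition of $\beta$ with one of $\gamma$ yields a decomposition of the composite. It then remains to treat the two horizontal composites $\beta \oplus \gamma$ and $\beta \otimes \gamma$, together with the base cases of morphisms canonical with respect to $\oplus$ alone and with respect to $\otimes$ alone.

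The engine of the argument is the interchange law, that is, bifunctoriality of $\oplus$ and $\otimes$: whenever the vertical composites are defined, $(\beta \circ \gamma) \oplus (\delta \circ \epsilon) = (\beta \oplus \delta) \circ (\gamma \oplus \epsilon)$, and likewise for $\otimes$. Padding with identities, this specializes to the factorization $\beta \oplus \gamma = (\beta \oplus 1) \circ (1 \oplus \gamma)$ (with the identities taken on the appropriate objects) and to the distribution law $(\beta_1 \circ \cdots \circ \beta_k) \oplus 1 = (\beta_1 \oplus 1) \circ \cdots \circ (\beta_k \oplus 1)$, together with the mirror statements for $1 \oplus (-)$ and the two analogues for $\otimes$. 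Thus, given $\beta \oplus \gamma$ with inductive decompositions $\beta = \beta_1 \circ \cdots \circ \beta_m$ and $\gamma = \gamma_1 \circ \cdots \circ \gamma_n$ into elementary morphisms, I first rewrite $\beta \oplus \gamma = (\beta \oplus 1) \circ (1 \oplus \gamma)$ and then expand each factor by the distribution law into a vertical composite of terms $\beta_s \oplus 1$ and $1 \oplus \gamma_t$. The case $\beta \otimes \gamma$ is identical with $\otimes$ in place of $\oplus$.

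This reduces everything to one key closure property: if $e$ is elementary canonical with respect to $\oplus$ and $\otimes$, then so are $e \oplus 1_C$, $1_C \oplus e$, $e \otimes 1_C$, and $1_C \otimes e$ for any object $C$ arising as a word evaluated at the objects in play. This is where the bookkeeping of words is essential. Writing $e = w(\sigma)$ with $w$ a word and $\sigma$ one of the basic morphisms (an associator or unitor for $\oplus$ or $\otimes$, or $i$, $j$, or an identity), and writing $c$ for the word presenting $C$, the morphism $e \oplus 1_C$ is presented by the enlarged word $w \oplus c$ applied to the same single basic morphism $\sigma$, with identities filling the newly adjoined slots; hence $e \oplus 1_C = (w \oplus c)(\sigma)$ is again elementary, and the three remaining cases are analogous. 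I expect this step, verifying that adjoining an identity sum or product factor to an elementary morphism merely enlarges the surrounding word while leaving the single basic morphism untouched, to be the main obstacle, since it requires carefully unwinding the functorial meaning of $w(\sigma)$ rather than any nontrivial coherence input.

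Finally, a morphism canonical with respect to $\oplus$ alone (respectively $\otimes$ alone) is handled by the very same induction restricted to the generators and closure operations of a single monoidal structure: by the monoidal preliminaries it is built from $\alpha_\oplus^{\pm 1}, \lambda_\oplus^{\pm 1}, \rho_\oplus^{\pm 1}$ and identities under vertical composition and $\oplus$, so the interchange argument above, using only the closure property for $\oplus$, writes it as a vertical composite of morphisms $w(\sigma)$ with $\sigma$ a generator of $\oplus$. Such morphisms are in particular elementary canonical with respect to $\oplus$ and $\otimes$, which completes the induction.
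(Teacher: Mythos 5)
The paper states this lemma without proof (the preprint explicitly defers all proofs to an upcoming update), so there is no in-paper argument to compare against; judged on its own, your proposal is correct and is the standard argument, namely the decomposition of canonical morphisms into expanded instances of the basic arrows that underlies Mac Lane's coherence theorem. The structural induction is set up properly: the interchange law yields $\beta \oplus \gamma = (\beta \oplus 1)\circ(1 \oplus \gamma)$ and distributes $- \oplus 1$ over vertical composites, and your key closure property --- that $e \oplus 1_C$, $1_C \oplus e$, $e \otimes 1_C$ and $1_C \otimes e$ are elementary whenever $e = w(\sigma)$ is, via the enlarged words $w \oplus c$, $c \oplus w$, and so on --- is exactly the right bookkeeping, and you correctly identify it as the only place where the meaning of $w(\sigma)$ must be unwound. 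One point you should make explicit rather than leave implicit: the morphisms canonical with respect to $\oplus$ (and $\otimes$) include the inverses $\alpha^{-1}$, $\lambda^{-1}$, $\rho^{-1}$, so your base case needs these inverses to be elementary. This holds either by reading the paper's phrase ``an associator or unitor'' in the definition of elementary as including inverses (which the lemma in fact forces, since for instance $\alpha^{-1}$ cannot be written as a composite of uninverted whiskered generators), or via the observation that $w(\sigma)^{-1} = w(\sigma^{-1})$, so that inverting an elementary morphism stays in the class; your use of $\alpha_\oplus^{\pm 1}$ silently assumes one of these readings. No such issue arises for $i$ and $j$, which enter the generating set uninverted, so with that reading fixed the induction closes and the proof is complete.
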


\begin{proof}
    We can proceed by structural induction on the formation rules of a canonical morphism. As a base case, $i, j$, identity, and the associators and unitors of $\oplus$ and $\otimes$ are basic canonical (where the word in question is just $\_$). Now suppose $\beta$ and $\gamma$ are the vertical composites of basic canonical morphisms. Then $\beta \oplus \gamma = (1 \oplus \gamma) \circ (\beta \oplus 1)$ is expressible as a composite of basic canonical morphisms, since each factor on the right-hand side of the equality can also be expressed as such, since $\oplus$ is a functor. The same argument takes care of the $\beta \otimes \gamma$ case, while the $\beta \circ \gamma$ case is immediate.
\end{proof}

 In what remains, we will be focused on canceling units. We will refer to a word as \emph{unit-free} if there is no occurrence of 0 or 1 in it. We will be concerned with defining a concrete sequence of canonical morphisms that removes all units from a word and we will show that this commutes with essentially all other basic canonical morphisms. For our purposes, we will only need to define this for words up to length 2.

\begin{definition}\label{def: unit cancellation morphism}
    Let $\C$ be a prelinear category.
    \begin{itemize}
        \item For a given word w of length 0, we take the unique morphisms $w \to 0$ and $w \to 1$ to be \emph{unit cancellation morphisms for $w$}.
        \item Let $w$ be a word of length 1. Then there exists a unique sequence of operations \\ $(\Box_1,z_1,s_1), \ldots, (\Box_k,z_k,s_k)$ where each $\Box_i$ is either $\oplus$ or $\otimes$; each $z_i$ is a word of length 0; each $s_k$ is either ``left'' or ``right''; and applying first $(\Box_1,z_1,s_1)$ to $\_$, and then each subsequent $(\Box_i,z_i,s_i)$ in order results in $w$. Then the \emph{unit cancellation morphism for $w$} is defined as the composite $u_1\ldots u_k$ where each $u_i$ is $\Box_i$ applied to the identity and the unique morphism $z_i \to 0$ if $\Box_i$ is $\oplus$ and the unique morphism $z_i \to 1$ if $\Box_i$ is $\otimes$, followed by the appropriate unitor. The codomain of $u_1\ldots u_k$ is necessarily the word $\_$.
        \item Let $w$ be a word of length 2. Then there exists a unique pair of words $w_1$ and $w_2$, each of length 1; a unique $\Box$ which is one of $\oplus$ or $\otimes$; as well as a unique sequence of operations $(\Box_1,z_1,s_1), \ldots, (\Box_k,z_k,s_k)$ as defined above, such that applying first $(\Box_1,z_1,s_1)$ to $w_1 \Box w_2$, and then each subsequent $(\Box_i,z_i,s_i)$ in order results in $w$. Then the \emph{unit cancellation morphism for $w$} is the composite of the sequence of morphisms defined as for the length 1 case, taking $w$ to $w_1 \Box w_2$, followed by $u_1 \Box u_2$, where $u_i$ is the unit cancellation morphism for $w_i$. The unit cancellation morphism for $w$ here necessarily has codomain $\_ \Box \_$ . 
    \end{itemize}
\end{definition}

We will return to the following simple observation.

\begin{lemma}\label{lemma: words of length 0 canonically zero}
    In a prelinear category, any word of length zero is canonically isomorphic to both 0 and 1.
\end{lemma}

\begin{proof}
    We can apply structural induction to the formation  rules of a word of length zero, which simplify to:
    \begin{itemize}
        \item 0 and 1 are words of length zero.
        \item $w_1 \oplus w_2$ is a word of length zero whenever $w_1$ and $w_2$ are.
        \item $w_1 \otimes w_2$ is a word of length zero whenever $w_1$ and $w_2$ are.
    \end{itemize}
    The statement holds immediately for our base cases of 0 and 1. Now suppose it holds for words $w_1$ and $w_2$ of length zero say with canonical isomorphisms 
    \[\xymatrix{
        w_1 \ar[r]_{\sim}^{s_1} & 0 & & w_1 \ar[r]_{\sim}^{p_1} & 1 \\
         w_2 \ar[r]_{\sim}^{s_2} & 0 & & w_2 \ar[r]_{\sim}^{p_2} & 1
    }\]
    Then we form
    \[\xymatrixcolsep{3em}\xymatrix{
        w_1 \oplus w_2 \ar[r]_-{\sim}^-{s_1 \oplus s_2} & 0 \oplus 0 \ar[r]_-{\sim}^-{\lambda} & 0 
    }\]
    and 
    \[\xymatrixcolsep{3em}\xymatrix{
        w_1 \otimes w_2 \ar[r]_-{\sim}^-{p_1 \oplus p_2} & 1 \otimes 1 \ar[r]_-{\sim}^-{\lambda} & 1 
    }\]
    and of course $j$ and its inverse allows us to go between 0 and 1. 
\end{proof}

We next set about proving that our atomic canonical morphisms commute with unit cancellation on words up to length 2. The exception will be the case of $i \colon w_1 \oplus w_2 \to w_1 \otimes w_2$ where both $w_1,w_2$ have length 1.

\begin{lemma}
    Suppose $\alpha \colon w \to w'$ is an associator between words of length at most 2, and let $u$ and $u'$ be the unit cancellation morphisms for $w$ and $w'$, respectively. Then the diagram 
    \[\xymatrix{
            \bullet \ar[d]_1 & w \ar[l]_{u} \ar[d]^\alpha \\
            \bullet & w' \ar[l]^{u'}
    }\]
    commutes.
\end{lemma}

\begin{proof}
    We will show the case for an associator for $\oplus$, with the $\otimes$ case being essentially the same. Our associator is of the form $\alpha \colon w_1 \oplus (w_2 \oplus w_3) \to (w_1 \oplus w_2) \oplus w_3$ for some words $w_1,w_2,w_3$.
    The statement is immediately true in the event that the length of $w,w'$ is 0. Suppose now that $w,w'$ have length 1. Then one of $w_1,w_2,w_3$ is of length 1 and the other two are of length 0. We will show the case where $w_1$ is of length 1. Then in the following diagram the top and bottom composites are the first few factors of the unit cancellation morphisms for $w_1 \oplus (w_2 \oplus w_3)$ and $(w_1 \oplus w_2) \oplus w_3$, respectively.
    \[
    \xymatrix{
        w_1 \ar[ddd]_1 & w_1 \oplus 0 \ar[l]_{\rho} & & & w_1 \oplus (w_2 \oplus w_3) \ar[lll]_{1 \oplus !} \ar[ddd]^{\alpha} \ar[dl]^{1 \oplus (! \oplus !)} \\
        & & & w_1 \oplus (0 \oplus 0) \ar[llu]^{1 \oplus ! = 1 \oplus \rho} \ar[d]^{\alpha} \\
        & & & (w_1 \oplus 0) \oplus 0 \ar[dll]_-{\rho} \\
        w_1 & w_1 \oplus 0 \ar[l]^-{\rho} & w_1 \oplus w_2 \ar[l]^{1 \oplus !} & (w_1 \oplus w_2) \oplus 0 \ar[l]^{\rho} \ar[u]^{(1 \oplus !) \oplus 1} & (w_1 \oplus w_2) \oplus w_3 \ar[l]^{1 \oplus !} \ar[ul]_{(1 \oplus !) \oplus !}
    }
    \]
    The triangles commute by the fact that $\oplus$ is a functor, and that zero morphisms absorb other morphisms. The right-hand and bottom-middle quadrilaterals commute by naturality of $\alpha$ and $\rho$, respectively. The left-hand region commutes by coherence for $\oplus$ (evaluating $w_1$ at any object $X$ will make the diagram commute). If we stick the rest of the unit cancellation morphisms $w_1 \to \_$ on the left, we get our desired result. The cases where $w_2$ or $w_3$ is of length 1 are similar.

    We move on to the case where the lengths of $w,w'$ is 2. Here either one of $w_1,w_2,w_3$ is of length 2 or two of $w_1,w_2,w_3$ are of length 1. The former case proceeds just as the $|w|=1$ case above. For the latter case, suppose for example that the lengths of $w_1,w_2$ are 1 so that the length of $w_3$ is 0. Then the word $w_1 \oplus (w_2 \oplus w_3)$ has outermost sum the sum of two words of length 1 and so its unit cancellation morphism will be the sum of the unit cancellation morphism of $w_1$ and the unit cancellation morphism of $w_2$. Now, to make the requisite diagram commute, we can push the $w_1$ part of this unit cancellation morphism further, so that we deal with the $w_2$ unit cancellation morphism first, and sum by identity on the left. On the other hand, $(w_1 \oplus w_2) \oplus w_3$ first needs to be cleaned of $w_3$ before we can move on to the sum of unit cancellation morphisms for summands of length 1. This takes us to the following diagram.
    \[\xymatrixcolsep{4em}\xymatrix{
        w_1 \oplus w_2 \ar[d]_1 & w_1 \oplus (w_2 \oplus 0) \ar[d]_{\alpha} \ar[l]_-{1 \oplus \rho} & w_1 \oplus (w_2 \oplus w_3) \ar[d]^{\alpha} \ar[l]_-{1 \oplus (1 \oplus !)} \\
        w_1 \oplus w_2 & (w_2 \oplus w_2) \oplus 0 \ar[l]^-{\rho} & (w_1 \oplus w_2) \oplus w_3 \ar[l]^-{(1 \oplus 1) \oplus !}
    }\]
    As described, the top and bottom composites can be seen as the first few factors of the corresponding unit cancellation morphisms. The right-hand square commutes by naturality of $\alpha$ while the left-hand square commutes by coherence of $\oplus$. Again, we can follow on the left with the remaining factors of the unit cancellation morphisms to make our requisite diagram commute. In the case where $|w_2|=0$, we do the same, but push the needed unit cancellation along in both the top and bottom composite, while in the case of $|w_1|=0$ we only push things along in the bottom composite.
\end{proof}

The other atomic canonical morphisms are easier to deal with.

\begin{lemma}
    Suppose $\upsilon \colon w \to w'$ is a unitor between words of length at most 2, and let $u$ and $u'$ be the unit cancellation morphisms for $w$ and $w'$, respectively. Then the diagram 
    \[\xymatrix{
            \bullet \ar[d]_1 & w \ar[l]_{u} \ar[d]^\upsilon \\
            \bullet & w' \ar[l]^{u'}
    }\]
    commutes.
\end{lemma}

\begin{proof}
    We take the right unitor for $\oplus$ as an example. The same argument actually applies to words of any length nonzero length. In the following diagram
    \[\xymatrix{
        w_1 \ar[d]_1 & w_1 \oplus 0 \ar[l]_{\rho} \ar[d]^{\rho} \\
        w_1 & w_1 \ar[l]^1
    }\]
    the top morphism is the beginning of the unit cancellation morphism for the top-right word, while in the bottom, we have just deferred unit cancellation. Again, sticking the rest of unit cancellation onto the left of the diagram makes the required diagram commute. If the length of $w_1$ were zero, one would take 0 or 1 as the object on the right-hand side, and the diagram will commute.
\end{proof}

\begin{lemma}\label{lemma: atomic prelineariser}
    Suppose $i \colon w_1 \oplus w_2 \to w_1 \otimes w_2$ is a prelineariser between words of length at most 2, and let $u$ and $u'$ be the unit cancellation morphisms for $w_1 \oplus w_2$ and $w_1 \otimes w_2$, respectively. Then the diagram 
    \[\xymatrix{
            \_ \oplus \_ \ar[d]_i & w_1 \oplus w_2 \ar[l]_-{u} \ar[d]^i \\
            \_ \otimes \_ & w_1 \otimes w_2 \ar[l]^-{u'}
    }\]
    commutes if the lengths of $w_1$ and $w_2$ are 1 while
    \[\xymatrix{
            \bullet \ar[d]_1 & w_1 \oplus w_2 \ar[l]_-{u} \ar[d]^i \\
            \bullet & w_1 \otimes w_2 \ar[l]^-{u'}
    }\]
    commutes otherwise.
\end{lemma}

\begin{proof}
    For the case where the lengths of both $w_1$ and $w_2$ are 1, we note that $u=u_1\oplus u_2$ while $u'=u_1 \otimes u_2$ and so commutativity of the diagram follows from naturality of $i$. If the lengths of both are 0, then the diagram trivially commutes as usual. Now we take on the case where $|w_1 \oplus w_2| = 1$, say with $|w_1|=1$. Then the start of our unit cancellation diagram will look like
    \[\xymatrix{
        w_1 \ar[dd]_1 & w_1 \oplus 0 \ar[l]_{\rho} \ar[d]_i & w_1 \oplus w_2 \ar[l]_{1 \oplus !} \ar[dd]^i \\
        & w_1 \oplus 0 \ar[d]_{1 \otimes !} \\
        w_1 & w_1 \otimes 1 \ar[l]^{\rho} & w_1 \otimes w_2 \ar[l]^{1 \otimes !} \ar[ul]_{1 \otimes !} 
    }\]
    The right-hand quadrilateral commutes by naturality of $i$, the triangle commutes since $\otimes$ is a functor and zero morphisms absorb, while the pentagon commutes by compatibility with $i$ and unitors. The last remaining case is where $|w_1 \oplus w_2|=2$ and one of $w_1,w_2$ have length 2. This case proceeds the same as the one where $|w_1 \oplus w_2|=1$. 
\end{proof}

We end off our treatment of atomic canonical morphisms by stating that the case for $j$ follows trivially. We move on to basic canonical morphisms and restrict our attention to words of length 2.

\begin{lemma}\label{lemma: basic canonical morphisms}
    Let $w,w'$ be any two words of length 2, $v(\beta) \colon w \to w'$ be any basic canonical morphism, and $u$ and $u'$ be the unit cancellation morphisms of $w$ and $w'$, respectively. If $\beta \colon v_1 \oplus v_2 \to v_1 \otimes v_2$ is a prelineariser where the lengths of $v_1$ and $v_2$ are both 1 then the diagram 
    \[\xymatrix{
            \_ \oplus \_ \ar[d]_i & w \ar[l]_-{u} \ar[d]^{v(\beta)} \\
            \_ \otimes \_ & w' \ar[l]^-{u'}
    }\]
    commutes. Otherwise the diagram
    \[\xymatrix{
            \bullet \ar[d]_1 & w \ar[l]_-{u} \ar[d]^{v(\beta)} \\
            \bullet & w' \ar[l]^-{u'}
    }\]
    commutes
\end{lemma}

\begin{proof}
    We split this proof up according to the length of the word that $\beta$ acts on, say with $\beta \colon x \to x'$. We start with the case where $|x|=0$. Then, referring to the description of the structure of $w$ in Definition \ref{def: unit cancellation morphism}, we must have that $x$ either occurs as one of the $z_i$ in the sequence of words of length zero added to $w_1 \Box w_2$ to make $w$, or, it occurs as some $z_i$ in the sequence of words of length zero to make one of $w_1$ or $w_2$ from $\_$. For the former case, we will at some point in the initial phase  of our unit cancellation (which results in $w_1 \Box w_2$) arrive at a diagram of the form
    \[\xymatrix{
        \ldots & & y_i \ar[d]_1 \ar[l] & y_i \Box_i U_i \ar[l]_-{\upsilon_i} \ar[d]_{1 \Box_i 1} & y_i \Box_i x \ar[l]_{1 \Box_i !} \ar[d]_{1 \Box_i \beta} & \ar[l] & \ldots \\
        \ldots & & y_i \ar[l] & y_i \Box_i U_i \ar[l]^-{\upsilon_i} & y_i \Box_i x' \ar[l]^{1 \Box_i !} & \ar[l] & \ldots 
    }\]
    where $U$ is the appropriate unit and $\upsilon$ the appropriate unitor (the unit may also be applied on the left). The right-hand square commutes by one of the above lemmas. Now suppose that $x$ appears in one of the sequences forming $w_1$ or $w_2$, say for example, $w_1$. Then we can deal with $w_2$ first in our unit cancellation composite, and we will get to a diagram of the form
    \[\xymatrixcolsep{3.5em}\xymatrix{
        \ldots & & y_i \Box \_ \ar[d]_1 \ar[l] & (y_i \Box_i U_i) \Box \_ \ar[l]_-{\upsilon_i \Box 1} \ar[d]_{(1 \Box_i 1) \Box 1} & (y_i \Box_i x) \Box \_ \ar[l]_{(1 \Box_i !) \Box 1} \ar[d]_{(1 \Box_i \beta) \Box 1} & \ar[l] & \ldots \\
        \ldots & & y_i \Box \_ \ar[l] & (y_i \Box_i U_i) \Box \_ \ar[l]^-{\upsilon_i \Box 1} & (y_i \Box_i x') \Box \_ \ar[l]^{(1 \Box_i !) \Box 1} & \ar[l] & \ldots 
    }\]
    Again, the right-hand square commutes by one of the above lemmas.

    If $|x|=1$ then $x$ must occur as one of the $y_i$'s named above in the formation of $w_1$ or $w_2$, say again $w_1$. Again, we can unit cancel on $w_2$ first, and then when we hit $x$ in the unit cancellation of $w_1$, we will have a square
    \[\xymatrix{
        \_ \Box \_ \ar[d]_1 & x \Box \_ \ar[l]_{u_x} \ar[d]_{\beta} & \ar[l] & \ldots \\
        \_ \Box \_ & x' \Box \_ \ar[l]_{u_{x'}} & \ar[l] & \ldots
    }\]
    which again commutes by one of the above lemmas.
    A similar reasoning applies to the cases where $|x|=2$ and $\beta$ is not a prelineariser from a sum of two words of length 1. Here $x$ appears as a standalone word in the initial phase of unit cancellation, and so we can apply the lemma for $\beta$ directly.
    The final case is where $|x|=2$ and $\beta$ is a prelineariser from a sum of two words of length 1. Again, $x$ appears as a standalone word in the initial phase of unit cancellation, and so we can apply Lemma \ref{lemma: atomic prelineariser} directly. 
\end{proof}

We end off the preliminary lemmas by establishing the above for an arbitrary canonical morphism that transforms sum to product of length 2.

\begin{lemma}\label{lemma: canonical morphism commutes with unit cancel}
    Let $w$ be any $\oplus$-word of length 2, $w'$ be any $\otimes$-word of length 2, and let $c \colon w \to w'$ be any canonical morphism. Then the following diagram commutes
    \[\xymatrix{
        \_ \oplus \_ \ar[d]_i & w \ar[l]_{u} \ar[d]^c \\
        \_ \otimes \_ & w' \ar[l]^{u'}
    }\]
    where $u,u'$ are the unit cancellation morphism for $w$ and $w'$, respectively.
\end{lemma}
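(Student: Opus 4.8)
The plan is to reduce to the case where $c$ is elementary canonical and then run a finite case analysis on the single generator involved: the associator/unitor cases are settled by the classical coherence theorems for the monoidal categories $(\C,\oplus)$ and $(\C,\otimes)$ separately, whereas the cases involving $i$ and $j$ are exactly the ones in which we must invoke that $i$ is a prelineariser and, crucially, a transformer.

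First I would record that for a length-$2$ word $w$ with core $X_1\,\Box_w\,X_2$ (where $\Box_w\in\{\oplus,\otimes\}$ is the operation joining the two objects once every unit is deleted), the cancellation morphism $u$ is assembled from unitors of both structures, the unique morphisms into $0$ and $1$, and the cancellation morphisms of the two length-$1$ subwords lying over $X_1$ and $X_2$. Using the preceding lemma I would factor $c=e_n\circ\cdots\circ e_1$ into elementary canonical morphisms with intermediate words $w=w_0,\dots,w_n=w'$, and for each $e_\ell\colon w_{\ell-1}\to w_\ell$ I would establish the square $k_\ell\circ u_{\ell-1}=u_\ell\circ e_\ell$, where $u_\ell$ is the cancellation of $w_\ell$ and $k_\ell$ is $i_{X_1,X_2}$ when $e_\ell$ turns the core operation from $\oplus$ into $\otimes$ and an identity otherwise. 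Pasting these vertically yields $k\circ u=u'\circ c$ with $k=k_n\circ\cdots\circ k_1$; since none of the generators $\alpha,\lambda,\rho,i,j$ can turn $\otimes$ back into $\oplus$ at the core, and here the core passes from $\oplus$ for $w$ to $\otimes$ for $w'$, exactly one $e_\ell$ changes the core, so $k$ collapses to $i_{X_1,X_2}$, which is the left edge demanded by the diagram.

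For an elementary $e_\ell=v(\beta)$ I would use functoriality of $v$ together with naturality of the structure maps to decouple the surrounding context and localise the discrepancy to the subword on which $\beta$ acts. If $\beta$ is an identity the square is trivial. If $\beta$ is an associator or unitor of $\oplus$ (respectively of $\otimes$), then after this localisation both legs from $w_{\ell-1}$ to its core are composites of $\oplus$-canonical (respectively $\otimes$-canonical) morphisms and hence agree by the coherence theorem for that single monoidal structure; this is where the triangle and pentagon axioms are consumed. If $\beta=i$ acts so as to change the core, the square is an instance of the naturality of $i$ applied to the two length-$1$ cancellation morphisms over $X_1$ and $X_2$ (together with naturality of the outer unitors). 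If instead $\beta=i$ is applied to a subword one of whose arguments is the unit $0$ --- so that $i$ converts $A\oplus 0$ into $A\otimes 0$ and the ensuing cancellation of that unit switches from $\rho_\oplus$ to $\rho_\otimes\circ(1_A\otimes j)$ --- then the square is precisely compatibility of $i$ with $\rho$ (and symmetrically with $\lambda$ on the other side). Finally $\beta=j$ is dispatched by the uniqueness of morphisms into $0$ and $1$ together with the same transformer identities.

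The main obstacle I anticipate is the bookkeeping that makes the transformer conditions land exactly: one must check that every time the cancellation of a unit meets an application of $i$, the gap between the $\oplus$-cancellation and the $\otimes$-cancellation of that unit is closed by precisely one of the two compatibility diagrams, and that at the core the naturality of $i$ genuinely suffices. A closely related subtlety is justifying the decoupling of the context $v$, namely that the cancellation of $w_\ell$ really does factor so that the single-structure coherence theorems may be applied to the isolated subword rather than to the full mixed word; making this factorisation precise, so that associators and unitors of one structure never get entangled with the cancellation steps of the other, is where most of the care will be needed.
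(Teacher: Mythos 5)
This preprint states its results without proofs (``an upcoming update will give proofs''), so there is no written proof in the paper to compare against; judged on its own terms, your plan is correct and is visibly the argument the paper's scaffolding was built for. You factor $c$ via the immediately preceding lemma into elementary canonicals $w(\beta)$, and your case analysis consumes exactly the prepared ingredients: single-structure coherence when $\beta$ is an associator or unitor of $\oplus$ or $\otimes$; naturality of $i$ at the two length-$1$ cancellation morphisms when $i$ fires at the core; the transformer compatibilities with $\rho$ and $\lambda$ (available here because a prelineariser has identity matrix components, hence is a transformer by Theorem~\ref{Thoerem: monoidal-sums: transformer iff identity matrix}) when $i$ fires against a unit; and uniqueness of morphisms into $0$ and $1$ (which are isomorphic, since the first Proposition makes $\C$ pointed) for $j$ and for anything happening inside length-$0$ subwords. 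Your $k_\ell$ bookkeeping, with the observation that the core operation can change at most once and only from $\oplus$ to $\otimes$ --- since $i$, unlike the associators and unitors, has no canonical inverse in a merely prelinear category --- is also the right way to read the lemma's diagram, which implicitly takes $w$ to have core $\oplus$ and $w'$ to have core $\otimes$.

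Two points of precision, both of which you already flag as the locus of care. First, your sentence that after localisation ``both legs from $w_{\ell-1}$ to its core are composites of $\oplus$-canonical morphisms'' is not literally true for mixed words: the unit cancellation morphism of a mixed word contains $\otimes$-unitors, foreign-subword cancellations, and unique maps into $0$ and $1$, so one must first use naturality of the $\oplus$-structure maps at those components (together with terminality of $0 \cong 1$) to strip them out, and only then invoke $\oplus$-coherence; your anticipated ``factorisation'' is exactly this step and does go through. Second, when $i$ fires against a unit subword $z$ that is a length-$0$ word other than $0$ itself, the $\rho$-compatibility square must be padded by naturality of $i$ at $(1_A, \mathord{!}\colon z \to 0)$ and the identity $\mathord{!}_{z \to 1} = j \circ \mathord{!}_{z \to 0}$; your prototype case treats only $A \oplus 0$, but your general framework covers this. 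With those details executed, the proposal is a complete and correct proof strategy.
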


\begin{proof}
    We know that in any word of length 2 there is a unique occurrence of a $w_1 \Box w_2$ where the lengths of $w_1$ and $w_2$ are both 1. This occurrence of $\Box$
    in $w$ is of course an $\oplus$ and the occurrence in $w'$ is an $\otimes$. We know that $c$ is a composite of basic canonical morphisms, and every word in this sequence is of length 2 and so contains a unique occurrence of an operation applied to two words of length 1. At some point, there must be an occurrence of $i$ that acts on this operation changing it from $\oplus$ to $\otimes$. Furthermore, since we are not considering inverses of $i$ here, occurrences of $\otimes$ cannot be changed back into occurrences of $\oplus$, and so this critical occurrence of $i$ must be unique. Now the first part of Lemma \ref{lemma: basic canonical morphisms} applies here. In all other occurrences of basic canonical morphisms in $c$, the second part of Lemma \ref{lemma: basic canonical morphisms} applies.
 \end{proof}

\begin{theorem}\label{theorem: prelinear makes canonical identity matrix}
    Let $\C$ be prelinear. There is a unique canonical morphism from any $\oplus$-word, $w$, to any $\otimes$-word, $w'$, of length $n$. This canonical morphism has matrix decomposition the $n\times n$ identity matrix.
\end{theorem}

\begin{proof}
    Existence here is straightforward: using the canonical morphisms of $\oplus$ we can transform $w$ to its standard form, then apply $i$ to transform this to the standard $\otimes$-word of length $n$, and then use the canonical morphisms of $\otimes$ to transform this back to $w'$.

    For uniqueness we deal words of length 0 and 1 and then separately words of length 2 and above. For $w$ length 0, we appeal to Lemma \ref{lemma: words of length 0 canonically zero}, which tells us that any words of zero length are zero objects. Now suppose that $w,w'$ have length 1. Then, since $c$ is the composite of basic canonical morphisms, we have, by the relevant cases of the lemmas above, that 
    \[\xymatrix{
            \_ \ar[d]_1 & w \ar[l]_-{u} \ar[d]^c \\
            \_ & w' \ar[l]^-{u'}
    }\]
    commutes (with $u,u'$ unit cancellation morphisms). Now, being canonical, this actually means that $u^{-1}$ is the inclusion of $\_$ into $w$ while $u'$ is the projection of $w'$ onto $\_$ and so the above commutative square tells us that the matrix presentation of $c$ is indeed the identity. 

    We now deal with words of length 2 and larger. To show that the matrix presentation of $c$ is the identity matrix, we show this to be the case for any tuple of objects $\underline{X}$ in $\C$. We isolate the $(i,j)^{\text{th}}$ entry of this matrix, by replacing all non-$X_i,X_j$ occurrences of objects in $\underline{X}$ in $c$ with 0, and applying the material we have developed above. We start with the $i < j$ case whereas the $i > j$ is essentially the same. Let us denote the tuple resulting from this replacement by $\underline{0_{i,j}{(X_i, X_j)}}$, and the morphism that performs this replacement as $\underline{!_{i,j}{(X_i, X_j)}}$. The following diagram summarises what we need
    \[\xymatrixcolsep{5em}\xymatrixrowsep{3em}\xymatrix{
        & & w(\underline{0_{i}{(X_i)}}) \ar[d]_{w(\underline{1_j(!_j)})} \ar[dr]^{w(\underline{!_{i}{(X_i)}})} \\
        X_i \ar[rru]^{\cong} \ar[r]_-{\iota} & X_i \oplus X_j \ar[d]_i & w(\underline{0_{i,j}{(X_i, X_j)}}) \ar[l]^-u \ar[d]_{c_{i,j}} & w(\underline{X}) \ar[l]^-{w(\underline{!_{i,j}{(X_i, X_j)}})} \ar[d]^c \\
        X_j  & X_i \otimes X_j \ar[l]^-{\pi} & w'(\underline{0_{i,j}{(X_i, X_j)}}) \ar[d]_{w'(\underline{1_j(!_j)})} \ar[l]_-{u'} & w'(\underline{X}) \ar[l]_-{w'(\underline{!_{i,j}{(X_i, X_j)}})} \ar[dl]^{w'(\underline{!_{j}{(X_j)}})} \\
        & & w'(\underline{1_{j}{(X_j)}}) \ar[ull]^{\cong}
    }\]
    where the top-left and bottom-left isomorphisms are canonical, the top vertical morphism is identity everywhere and $!$ at position j fed to the word $w$, and the bottom vertical morphism is defined similarly. 
    The top-left triangle commutes since the canonical isomorphism between the two representatives of the sum $X_i \oplus X_j$. The top-right triangle commutes by zero morphism absorption. The bottom triangles commute for dual reasons. The middle-right square commutes by naturality of $c$, while the middle-middle square commutes by Lemma \ref{lemma: canonical morphism commutes with unit cancel}. Now the composite of the topmost two morphisms is the inclusion of $X_i$ into the sum $w(\underline{X})$ while the composite of the bottommost two morphisms is the projection of the product $w'(\underline{X})$ to $X_j$. So commutativity of the above diagram says that the $(i,j)^{\text{th}}$ entry of the matrix presentation of $c$ is equal to the corresponding entry of the matrix presentation of $i \colon X_i \oplus X_j \to X_i \otimes X_j$. Since $i\neq j$, this is zero. If $i=j$, we can create the same picture as above, but bringing in a dummy summand from any other entry in $\underline{X}$. This will give us that the $(i,i)^\text{th}$ entry of the matrix for $c$ is the same as a diagonal entry on the matrix representation of $i$, which is identity.
\end{proof}

We next move on to the case where $i$ is invertible. Here we can move between all $\oplus\otimes$-words! For reasons that will become clear in the next section, we name this situation as follows.

\begin{definition}
    We will call a category $\C$ equipped with a lineariser $i$ a \emph{partially linear category}.
\end{definition}

\begin{definition}
    In a partially linear category, we define a canonical morphism (which we can now refer to as a canonical isomorphism) exactly as in a prelinear category, but we include in the definition any occurrence $i^{-1}$ as another atomic canonical isomorphism.
\end{definition}

We now state the coherence result in this setting.

\begin{theorem}
    Suppose $\C$ is equipped with a lineariser $i \colon \oplus \xlongrightarrow{\boldsymbol{\cdot}} \otimes$. Then any two $\oplus\otimes$-words, $w,w'$, of the same length have a unique isomorphism canonical with respect to $\oplus$ and $\otimes$ between them.
\end{theorem}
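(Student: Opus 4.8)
The statement is a coherence theorem, so the plan is to split it into an \emph{existence} and a \emph{uniqueness} half and to reduce everything to the two monoidal coherence theorems already recorded (one for $\oplus$, one for $\otimes$) together with the preceding coherence theorem for prelinear categories (the unique canonical morphism from an $n$-fold sum to an $n$-fold product, realising the identity matrix). First I would record what the lineariser hypothesis buys us: $i$ is now invertible, and by the Proposition the morphism $1 \to 0$ constructed there is a two-sided inverse of $j$, so $j$ is invertible as well and $\C$ is genuinely pointed with $0 \cong 1$. Consequently $i^{-1}$ and $j^{-1}$ are available as canonical morphisms, and the class of canonical isomorphisms is closed under inverse (the inverse of a composite, of a $\oplus$, and of a $\otimes$ of canonical isomorphisms is again canonical). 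I would then fix once and for all a normal form $N_n$, namely the unit-free, right-associated $\otimes$-word of length $n$.

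For existence, the plan is to produce for every $\oplus\otimes$-word $v$ of length $n$ a canonical isomorphism $\sigma_v \colon v(\underline{X}) \to N_n(\underline{X})$, after which $\sigma_w^{-1}\sigma_v \colon v(\underline{X}) \to w(\underline{X})$ is the required canonical isomorphism. The morphism $\sigma_v$ is assembled in three stages, each a canonical isomorphism by the reduction lemma (every canonical morphism is a composite of elementary canonical ones) and the closure of canonical morphisms under $\oplus$ and $\otimes$: first, apply $i$ at every internal $\oplus$-node, turning $v$ into a pure $\otimes$-word whose leaves may still be $0$ or $1$; second, apply $j$ at every $0$-leaf, so that all units become $1$; third, invoke the classical monoidal coherence theorem for $\otimes$ to cancel the $1$-units and reassociate, landing in $N_n$. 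Each stage is an isomorphism because its building blocks ($i$, $j$, and the unitors and associators for $\otimes$) are invertible, so $\sigma_v$ is a canonical isomorphism.

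For uniqueness, suppose $c_1, c_2 \colon v(\underline{X}) \to w(\underline{X})$ are both canonical. Using existence, choose a canonical isomorphism $s \colon S_n(\underline{X}) \to v(\underline{X})$ out of the standard $n$-fold sum $S_n$ and a canonical isomorphism $t \colon w(\underline{X}) \to P_n(\underline{X})$ into the standard $n$-fold product $P_n$. Then $t c_1 s$ and $t c_2 s$ are canonical morphisms from an $n$-fold sum to an $n$-fold product, so by the preceding coherence theorem for prelinear categories they are equal (both realise the identity matrix). Since $s$ and $t$ are isomorphisms, cancelling them yields $c_1 = c_2$; in particular the canonical isomorphism produced above is the unique one.

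I expect the main obstacle to be the existence reduction rather than uniqueness, which is essentially a corollary of the matrix coherence theorem. The delicate points in existence are: verifying that pushing $i$ and $j$ through the internal structure of an arbitrary word really does yield elementary canonical morphisms in the sense of the reduction lemma (here the lemma commuting unit-cancellation morphisms past canonical morphisms, proved for length $\le 2$, does the local work, glued along the word); checking that the three stages can be taken in this order, so that by the time $\otimes$-coherence is invoked everything is a genuine $\otimes$-word with only $1$-units; and, at the foundational level, confirming that admitting $i^{-1}$ as canonical (legitimate precisely because $i$ is a lineariser) is what makes canonical morphisms exist in \emph{both} directions between words. Without invertibility of $i$ one only obtains canonical morphisms from $\oplus$-heavy to $\otimes$-heavy words, which is exactly the asymmetry of the prelinear case.
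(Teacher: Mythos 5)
First, a caveat: the paper as it stands defers all proofs (``In this report we only state the main definitions and list the main results''), so there is no proof of this theorem in the source to compare against; what follows judges your proposal on its own merits. Your architecture --- a normal form $N_n$, an existence map $\sigma_v$ built in three stages ($i$ at the $\oplus$-nodes, $j$ at the $0$-leaves, then $\otimes$-coherence), and uniqueness by reduction to the sum-to-product coherence theorem --- is sound in outline and almost certainly the intended route. One pleasant point you could sharpen: closure under inverses needs no fiat for $j$, since the Proposition's morphism $\lambda_\otimes \circ i_{1,0} \circ \rho_\oplus^{-1} \colon 1 \to 0$ is canonical in the paper's literal sense and is automatically a two-sided inverse of $j$ because $0$ is initial and $1$ is terminal; it is only $i^{-1}$ that genuinely has to be adjoined to the definition of canonical, exactly as you suspect in your closing paragraph.

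The genuine gap is in your uniqueness step. The preceding theorem (unique canonical morphism from an $n$-fold sum to an $n$-fold product, with identity matrix) is stated for the prelinear notion of canonical, whose generators are $i$, $j$, and the two families of monoidal canonicals --- \emph{not} $i^{-1}$. But your own existence construction makes $s$ and $t$ of the form $\sigma_v^{-1}\sigma_{S_n}$ and $\sigma_{P_n}^{-1}\sigma_w$, so $t c_1 s$ and $t c_2 s$ generally contain $i^{-1}$ factors and lie outside the class that the cited theorem governs; you cannot invoke it without first extending it to the inverse-closed class, and that extension is precisely the content you are trying to prove. The repair is standard and cheap in this setting: prove, for each \emph{elementary} canonical $g \colon w(\underline{X}) \to w'(\underline{X})$, the normal-form identity $\sigma_{w'}\, g = \sigma_w$ (this is where the unit-cancellation lemma does its work, and note the paper only states it for length $\le 2$, so the ``glued along the word'' step needs an actual induction on word structure). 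Since under the lineariser hypothesis every elementary canonical is invertible, the identity for $g^{-1}$ follows by rearranging ($\sigma_{w'}\, g = \sigma_w$ iff $\sigma_w\, g^{-1} = \sigma_{w'}$), and induction on the length of a composite then gives $c = \sigma_w^{-1}\sigma_v$ for \emph{every} morphism $c \colon v(\underline{X}) \to w(\underline{X})$ canonical in the inverse-closed sense. This yields uniqueness directly, subsumes the sum-to-product theorem instead of citing it beyond its scope, and makes existence and uniqueness share one induction rather than two.
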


\begin{proof}
    To construct a canonical isomorphism from $w$ to $w'$, we can apply a morphism $\gamma \colon w \to w_0$, built up of identity and $j$, that changes every occurrence of 1 in $w$ to a 0. Then we can apply a succession of basic canonical morphisms built from $i^{-1}$, the composite of which we will call $\xi$, to change every occurrence of $\otimes$ in $w_0$ to $\oplus$. This results in a $\oplus$-word, say $w_{\oplus}$. We can do the analogous transformation on $w'$ to get an $\otimes$-word $w'_{\otimes}$. From $w_\oplus$ to $w'_\otimes$ we have a canonical morphism, say $\mu$, with matrix representation the identity matrix by Theorem \ref{theorem: prelinear makes canonical identity matrix}. Now suppose we had some other canonical $c \colon w \to w'$. We summarise the information in 
    \[\xymatrix{
        w_{\oplus} \ar[d]_\mu & w_0 \ar[l]_{\xi} & w \ar[d]_c \ar[l]_{\gamma} \\
        w'_\otimes \ar[r]_{\xi'} & w'_1 \ar[r]_{\gamma'} & w'  
    }\]
    Then, by Theorem \ref{theorem: prelinear makes canonical identity matrix}, the composite $x'^{-1}\gamma'^{-1}c\gamma^{-1}\xi^{-1}$ also has matrix representation the identity, and so $c=\gamma'\xi'\mu\xi\gamma$ as well.
\end{proof}

\section{Centrality with respect to sum and product structures}

We start by defining centrality with respect to a sum structure. Note that in general, a sum structure may not be a symmetric monoidal structure. A sum structure is symmetric if and only if its corresponding cover relation is.

\begin{definition}
    Let $f$ be a morphism in a category equipped with a sum structure $\oplus$. We will say $f$ is
    \begin{itemize}
        \item \emph{left-central with respect to $\oplus$} if $f \sqsubset_\oplus 1$.
        \item \emph{right-central with respect to $\oplus$} if $1 \sqsubset_\oplus f$, and
        \item \emph{central with respect to $\oplus$} if $f$ is both left-central and right-central with respect to $\oplus$.
    \end{itemize}
    We will drop the modifier ``with respect to $\oplus$'' when it is clear from context. We define centrality with respect to a product structure dually.
\end{definition}

In the event that the sum structure in question is in fact symmetric, then the three notions above are of course equivalent. This is the case for all our examples of partially linear categories. 

\begin{lemma}
    Consider a category equipped with a sum structure $\oplus$. A morphism $f$ is left-central w.r.t. $\oplus$ if and only if $f \sqsubset_\oplus g$ for every $g$ with the same codomain as $f$. $f$ is right-central w.r.t. $\oplus$ if and only if $g \sqsubset_\oplus f$ for every $g$ with the same codomain as $f$.
\end{lemma}

\begin{proof}
    Suppose $f \colon X \to Y$ is left-central. Then the following diagram commutes.
    \[\xymatrix{
        X \ar[d]_1 \ar[r]^-{\iota_1} & X \oplus W \ar[d]_{1 \oplus g} & W \ar[d]^g \ar[l]_-{\iota_2} \\
        X \ar@/_10pt/[dr]_f \ar[r] & X \oplus Y \ar[d]_{\rowvec{f}{1}} & Y \ar@/^10pt/[dl]^1 \ar[l] \\
        & Y
    }\]
\end{proof}

\begin{lemma}
    In a pointed category with sum structure $\oplus$, any zero morphism is central w.r.t. $\oplus$.
\end{lemma}

\begin{proof}
    Consider the following diagram.
    \[\xymatrix{
        X \ar[d]_{!} \ar@/^15pt/[rr]^-{\iota_1} \ar[r]_-{\rho^{-1}} & X \oplus 0 \ar[r]_-{1 \oplus !} \ar[dr]_{! \oplus !} & X \oplus Y \ar[d]_{! \oplus 1} & 0 \oplus Y \ar[l]^-{! \oplus 1} \ar[dl]^{1 \oplus 1} & Y \ar[l]^-{\lambda^{-1}} \ar@/_15pt/[ll]_-{\iota_2} \ar@/^15pt/[ddll]^1 \\
        0 \ar[drr]_{!} & & 0 \oplus Y \ar[d]_{\lambda} \\
        & & Y
    }
    \]
    All regions commute for the usual reasons, except for the left pentagon, which commutes since $! \oplus !$ is a zero morphism here. To confirm this, consider
    \[\xymatrix{
        X \oplus 0 \ar@/^40pt/[rrrr]^{! \oplus !} \ar[r]_-{! \oplus 1} & 0 \oplus 0 \ar@/^15pt/[rr]^{1} \ar[r]_-{\lambda} & 0 \ar[r]_-{\lambda^{-1}} & 0 \oplus 0 \ar[r]_{1 \oplus !} & 0 \oplus Y
    }\]
    A similar argument shows us that any zero morphism is also right-central w.r.t. $\oplus$.
\end{proof}

The above lemma tells us that in the pointed context, any sum structure comes equipped with ``projections'', which we will refer to as $p_1$ and $p_2$, while dually any product structure comes equipped with ``inclusions'', referred to as $i_1$ and $i_2$. These inclusions may not necessarily be jointly monic though, as is the case for pointed categories with products that are not weakly unital (like, for example, $\mathbf{Set}_*^{\text{op}}$). If the sum and product structures make up a partially linear category, then, by definition, we have the equations
\begin{alignat}{4}\label{equations: p's and i's in partially linear}
    & p_1 = \pi_1 i \quad\quad && p_2 = \pi_2 i \quad\quad &&
    i_1 = i \iota_1 \quad\quad && i_2 = i \iota_2
\end{alignat}
If $i$ is a lineariser, then $i_1,i_2$ are jointly epi, while $p_1,p_2$ are jointly monic.

Now we move on to centrality of morphisms in the prelinear context.

\begin{definition}
    Suppose a category $\C$ with sum structure $\oplus$ and product structure $\otimes$ is prelinear. We will say a morphism $f$ is \emph{left-central} in $\C$ if there exists a morphism represented by the matrix
    \[
        \left[\begin{array}{cc}
            1 & f \\
            z & 1 
        \end{array} \right]
    \]
    and we will say that $f$ is \emph{right-central} if there exists a morphism represented by the matrix
    \[
        \left[\begin{array}{cc}
            1 & z \\
            f & 1 
        \end{array} \right]
    \]
    If $f$ is both left-central and right-central, then we will call it \emph{central}. We will denote by $LZ(X,Y)$, $RZ(X,Y),$ and $Z(X,Y)$ the class of all left-central, right-central, and central morphisms from an object $X$ to an object $Y$.
\end{definition}

In most situations, the three notions of centrality in a prelinear category coincide. With the help of the following lemma, is easy to check that this will always be the case if both the sum structure and product structure are symmetric.

\begin{lemma}\label{lemma: swap map and inclusions commute}
    Suppose a sum structure $\oplus$ is symmetric, say with swap map $\gamma$. Then the following diagram commutes.
    \[\xymatrix{
        & X \oplus Y \ar[d]_\gamma \\
        X \ar[ur]^{\iota_1} \ar[r]_-{\iota_2} & Y \oplus X & Y \ar[ul]_{\iota_2} \ar[l]^-{\iota_1} 
    }\]
\end{lemma}

In many situations, there is another equivalent notion of centrality. If a morphism $f$ is left-central in a prelinear category, then it is left-central w.r.t. $\oplus$ and right-central w.r.t. $\otimes$. If $f$ is right-central in a prelinear category, then it is right-central w.r.t. $\oplus$ and left-central w.r.t. $\otimes$. So centrality in a prelinear category implies centrality with respect to both sum and product structures. In all of our examples, the converse implication also holds.

\begin{proposition}\label{Prop: central morphisms closed under comp}
    In any partially linear category, the left-central morphisms are closed with respect to composition. The same holds for the right-central morphisms as well as the central morphisms.
\end{proposition}

\begin{proof}
    We run an argument for left-central morphisms. Let $f \colon X \to Y$ and $g \colon Y \to Z$ be left-central morphisms in a partially linear category, say with 
    \[
        \hat{f} = \left[ \begin{array}{cc}
            1 & f \\
            z & 1
        \end{array} \right] \text{\,\,\,\, and \,\,\,\,}
        \hat{g} = \left[ \begin{array}{cc}
            1 & g \\
            z & 1
        \end{array} \right]
    \]
    We claim that the composite
    \[\xymatrix{
        X \oplus Z \ar[r]_-{\iota} & (X \oplus Y) \oplus Z \ar[r]_-{\hat{f} \oplus 1} & (X \otimes Y) \oplus Z \ar[r]_-c & X \otimes (Y \oplus Z) \ar[r]_-{1 \otimes \hat{g}} & X \otimes (Y \otimes Z) \ar[r]_-{\pi} & X \otimes Z  
    }\]
    is represented by the matrix
    \[\left[ \begin{array}{cc}
            1 & gf \\
            z & 1
        \end{array} \right]\]
    where $c$ is canonical. We expand the composite to form the diagram
    \begin{equation}\label{diagram: composition of central morphisms is central}
        \xymatrix{
            & & X \oplus Z \ar[d]_{\iota} \\
            X \ar[urr]^{\iota_1} \ar[r]_-{\iota_1} \ar[d]_1 & X \oplus Y \ar[d]_{\hat{f}} \ar[r]^-{\iota_1} & (X \oplus Y) \oplus Z \ar[d]_{\hat{f} \oplus 1} & Z \ar[l]_-{\iota_2} \ar[d]^1 \ar[ul]_{\iota_2} \\
            X \ar[d]_1 & X \otimes Y \ar[l]^{\pi_1} \ar[r]_-{\iota_1} & (X \otimes Y) \oplus Z \ar[d]_c & Z \ar[l]_-{\iota_2} \ar[dr]^1 \\
            X \ar[d]_1 & & X \otimes (Y \oplus Z) \ar[ll]^-{\pi_1} \ar[r]^-{\pi_2} \ar[d]_{1 \otimes \hat{g}} & Y \oplus Z \ar[d]_{\hat{g}} & Z \ar[l]_-{\iota_2} \ar[d]^1 \\
            X & & X \otimes (Y \otimes Z) \ar[d]_{\pi} \ar[ll]_-{\pi_1} \ar[r]_-{\pi_2} & Y \otimes Z \ar[r]^-{\pi_2} & Z \\
            & & X \otimes Z \ar[ull]^{\pi_1} \ar[urr]_{\pi_2} 
        }
    \end{equation}
    The regions in the top two rows and bottom two rows commute by definition. We verify that the middle-left pentagon commutes with the following
    \[\xymatrixcolsep{3em}\xymatrix{
        & X \otimes Y \ar@/_100pt/[dd]_{\pi_1} \ar[dl]^{1 \otimes !} \ar@/^25pt/[rr]^{\iota_1} \ar[d]^{1 \otimes !} \ar[r]_-{\rho^{-1}} & (X \otimes Y) \oplus 0 \ar[d]_c \ar[r]^{(1 \otimes 1) \oplus !} & (X \otimes Y) \oplus Z \ar[d]^c \\
        X \otimes 1 \ar[r] \ar[dr] & X \otimes 0 \ar[dr]_{c\rho^{-1}} \ar[d] & X \otimes (Y \oplus 0) \ar[d]_(0.3){1 \otimes (! \oplus 1)} \ar[r]^{1 \otimes (1 \oplus !)} & X \otimes (Y \oplus Z) \ar@/^100pt/[dll]^{\pi_1} \ar[dl]^{1 \otimes (! \oplus !)} \ar@/^15pt/[ddl]^{1 \otimes !} \\
        & X & X \otimes (0 \oplus 0) \ar[l] \\
        & & X \otimes 1 \ar[ul] \ar[u]
    }\]
    where all unlabelled arrows are canonical. The two quadrilaterals commute by naturality of the canonical morphisms involved, while the triangles can be seen to commute via either absorption of zero morphisms or uniqueness of canonical morphisms between words of the same length. The right-hand pentagon in Diagram (\ref{diagram: composition of central morphisms is central}) commutes by a similar argument. From said diagram, we immediately verify that the diagonal entries of matrix representation of the vertical composite are identity morphisms. To see that the bottom-left entry is zero, we illustrate that the composite $\pi_1c\iota_2$ is zero. For this, observe that $c$ factors through the morphism represented by the $3 \times 3$ identity matrix, $I_3$, as in
    \[\xymatrix{
        & X \oplus (Y \oplus Z) \ar[d]_{I_3} & (X \otimes Y) \oplus Z \ar[l] \ar[d]^c & Z \ar[l]^-{\iota_2} \ar@/_15pt/[ll]_-{\iota_3} \\
        & X \otimes (Y \otimes Z) \ar[dl]_{\pi_1} \ar[r] & X \otimes (Y \oplus Z) \ar[dll]^{\pi_1} \\
        X
    }\]
    The two triangles here commute by a similar argument to the one above.
    
    We end off by computing the top-right entry of the matrix representation of the vertical composite. This is given by walking through Diagram (\ref{diagram: composition of central morphisms is central}) from $X$ to $Z$, only taking one downward step at a time. We show the middle step, $\pi_2 c \iota_1$ actually factors through $Y$. For this, consider
    \[\xymatrixcolsep{3em}\xymatrix{
        X \otimes Y \ar@/^2em/[rrd]^{\pi_2} \ar[d] \ar@/_5em/[dd]_{\iota_1} \\
        (X \otimes Y) \oplus 0 \ar[d]^{(1 \otimes 1) \oplus !}  \ar[r]^{(! \otimes 1) \oplus 1} & (0 \otimes Y) \oplus 0 \ar[d]_{(1 \otimes 1) \oplus !}  \ar@/^10pt/[r] & Y \ar@/^2em/[dddl]^{\iota_1} \ar@/^10pt/[l] \\
        (X \otimes Y) \oplus Z \ar[d]_c \ar[r]_{(! \otimes 1) \oplus 1} & (0 \otimes Y) \oplus Z \ar[d]_c \\
         X \otimes (Y \oplus Z) \ar@/_1em/[dr]_{\pi_2} \ar[r]_{! \otimes (1 \oplus 1)} & 0 \otimes (Y \oplus Z) \ar[d] \\
         & Y \oplus Z & 
    }\]
    where, as usual, unlabelled arrows are canonical. All regions with an edge labelled by $\pi$ or $\iota$ commute by an argument similar to what we have for the pentagons above. With the outer edges of this diagram equal, we easily see that the top-right entry of our big vertical composite in (\ref{diagram: composition of central morphisms is central}) is $gf$.
\end{proof}

We next move towards defining addition of central morphisms in a partially linear category as Lawvere and Schanuel do  for linear categories in \cite{lawvere2}. Each of the appertaining lemmas below are phrased for left-central morphisms, but apply equally to the class of right-central, and the class of central morphisms.

\begin{lemma}
    Suppose $f,g \colon X \to Y$ are left-central morphisms in a partially linear category. Then the composite 
    \[
    \left[ \begin{array}{cc}
        1 & g \\
        z & 1
    \end{array}\right] i^{-1}  \left[ \begin{array}{cc}
        1 & f \\
        z & 1
    \end{array}\right]
    \] 
    is again of the form 
     \[\left[ \begin{array}{cc}
        1 & h \\
        z & 1
    \end{array}\right]\]
    for some unique morphism $h$.
\end{lemma}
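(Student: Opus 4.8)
The plan is to read the four matrix entries of the composite off directly, rather than seek a general ``matrix multiplication'' formula through $i^{-1}$ (such a formula would appear to require adding morphisms, which is exactly the structure being built here and is not yet available). Write $C_f$ and $C_g$ for the two displayed $2\times 2$ matrices regarded as morphisms $Y\oplus X \to Y\otimes X$, and let $M := C_g\, i^{-1} C_f \colon Y \oplus X \to Y \otimes X$ be the composite. By the preliminaries, the presentation of a morphism is unique whenever it exists, so it suffices to show that the entries $\pi_1 M \iota_1,\ \pi_2 M\iota_1,\ \pi_2 M \iota_2$ equal $1_Y,\ z,\ 1_X$ respectively, and then to simply \emph{define} $h := \pi_1 M \iota_2$; uniqueness of $h$ is then immediate from uniqueness of presentations (and the characterisation lemma even shows the resulting $h$ is central).

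The two observations that drive everything are that $C_f$ shares its first column with $i$, and $C_g$ shares its second row with $i$. Indeed, since $i$ is a prelineariser its presentation is the identity matrix, whose first column is $\rowvec{1_Y}{z}$-transpose and whose second row is $\rowvec{z}{1_X}$; comparing with $C_f = \begin{bmatrix} 1_Y & f \\ z & 1_X \end{bmatrix}$ and using that the projections $\pi_1,\pi_2$ are jointly monomorphic gives $C_f \iota_1 = i \iota_1$, while comparing with $C_g = \begin{bmatrix} 1_Y & g \\ z & 1_X \end{bmatrix}$ and using that the inclusions $\iota_1,\iota_2$ are jointly epimorphic gives $\pi_2 C_g = \pi_2 i$.

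First I would exploit the column identity to compute the first column of $M$: from $C_f \iota_1 = i\iota_1$ and $i^{-1} i = 1_{Y\oplus X}$ we get $i^{-1} C_f \iota_1 = \iota_1$, hence $M \iota_1 = C_g \iota_1$, whose components under $\pi_1,\pi_2$ are $1_Y$ and $z$. Dually, I would exploit the row identity to compute the second row of $M$: from $\pi_2 C_g = \pi_2 i$ and $i\, i^{-1} = 1_{Y\otimes X}$ we get $\pi_2 C_g\, i^{-1} = \pi_2$, hence $\pi_2 M = \pi_2 C_f$, whose components under $\iota_1,\iota_2$ are $z$ and $1_X$. Reading these together yields $\pi_1 M\iota_1 = 1_Y$, $\pi_2 M \iota_1 = z$ and $\pi_2 M \iota_2 = 1_X$, so that $M$ does have the presentation $\begin{bmatrix} 1 & h \\ z & 1 \end{bmatrix}$ with $h = \pi_1 M \iota_2$, as required.

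I expect the only genuine obstacle to be bookkeeping of domains and codomains: one must check that $C_f$, $C_g$ and $i^{-1}$ line up as morphisms between $Y\oplus X$ and $Y\otimes X$ so that the shared column of $C_f$ and $i$, and the shared row of $C_g$ and $i$, are being taken at the correct objects. Once these identifications are pinned down, the joint mono/epi arguments together with the cancellations $i^{-1}i = 1$ and $i\,i^{-1} = 1$ do all the work, and crucially no addition of morphisms is ever needed -- the one potentially problematic entry is precisely the one we are free to \emph{name} $h$.
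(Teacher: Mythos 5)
The paper itself defers all proofs to a forthcoming update, so there is no written proof of this lemma to compare against; judged on its own, your argument is correct and complete. Your two key identities do all the work exactly as claimed: since $C_f$ and $i_{Y,X}$ share the first column $\colvec{1_Y}{z}$, joint monomorphicity of $\pi_1,\pi_2$ gives $C_f\iota_1=i\iota_1$, hence $M\iota_1=C_g\iota_1$ after cancelling $i^{-1}i$; since $C_g$ and $i_{Y,X}$ share the second row $\rowvec{z}{1_X}$, joint epimorphicity of $\iota_1,\iota_2$ gives $\pi_2C_g=\pi_2 i$, hence $\pi_2M=\pi_2C_f$ after cancelling $ii^{-1}$; together these pin down three of the four entries of $M$, and the uniqueness of $h:=\pi_1M\iota_2$ is immediate from the paper's observation that matrix presentations are unique (joint epi/mono again). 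This is surely the intended argument --- it is the standard Lawvere--Schanuel computation adapted to the partially linear setting, and it correctly avoids any circular appeal to addition of morphisms --- while your side remark that the characterisation lemma makes $h$ itself central is precisely what the paper needs afterwards for $(Z(X,Y),+,z)$ to be a monoid, so it is worth stating explicitly rather than parenthetically.
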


\begin{proof}
    We expand the composite to
    \[\xymatrixcolsep{5em}\xymatrixrowsep{5em}\xymatrix{
        X \ar[r]^-{\iota_1} \ar[d]_1 \ar@/^10pt/[drr]_(.2)f & X \oplus Y \ar@{.>}[d] &  Y \ar[d]^1 \ar@/_10pt/[dll]^(.2)z \ar[l]_-{\iota_2} \\
        X \ar@<0.5ex>[r]^-{i_1} & X \otimes Y \ar[d]_{i^{-1}} \ar@<0.5ex>[l]^-{\pi_1} \ar@<-0.5ex>[r]_-{\pi_2} & Y \ar@<-0.5ex>[l]_-{i_2} \\
        X \ar@/_10pt/[drr]_(.2)g \ar[d]_1 \ar@<0.5ex>[r]^-{\iota_1} & X \oplus Y \ar@{.>}[d] \ar@<0.5ex>[l]^-{p_1} \ar@<-0.5ex>[r]_-{p_2} & Y \ar@<-0.5ex>[l]_-{\iota_2} \ar@/^10pt/[dll]^(.2)z \ar[d]^1 \\
        X & X \otimes Y \ar[l]^{\pi_1} \ar[r]_{\pi_2} & Y
    }\]
    and compute each of the entries of its matrix representation. Thanks to Equation (\ref{equations: p's and i's in partially linear}), we have that the diagonal entries of this matrix are the identity morphisms. For the (2,1)-entry, we have
    \begin{align*}
        \pi_1 \left[ \begin{array}{cc}
        1 & g \\
        z & 1
    \end{array}\right] i^{-1} \left[ \begin{array}{cc}
        1 & f \\
        z & 1
    \end{array}\right] \iota_2 \,\, & = \,\, \pi_1 \left[ \begin{array}{cc}
        1 & g \\
        z & 1
    \end{array}\right] i^{-1} i_2 \\
    & = \,\, \pi_1 \left[ \begin{array}{cc}
        1 & g \\
        z & 1
    \end{array}\right] \iota_2 \\
    & = \,\, z
    \end{align*}
\end{proof}

Here $h$ is explicitly given by $\rowvec{g}{1}i^{-1}\colvec{1}{f}$. Now, an analogous result holds for right-central morphisms, whence the unique $h$ there is given by $\rowvec{1}{g}i^{-1}\colvec{f}{1}$. Thanks to Lemma \ref{lemma: swap map and inclusions commute}, the two $h$'s coincide when $\oplus$ and $\otimes$ are symmetric.

\begin{lemma}
    Let $\C$ be a partially linear category. Then the assignment 
    \[+ \,\, \colon \,\, (f,g) \,\, \mapsto \,\, \rowvec{g}{1}i^{-1}\colvec{1}{f}\]
    is a monoid operation on $LZ(X,Y)$.
\end{lemma}

\begin{proof}
    We immediately get that the zero morphism is a unit for +. We confirm associativity with:
    \begin{eqnarray*}
        \left[\begin{array}{cc}
            1 & f + (g + h)  \\
            z & 1
        \end{array}\right] & = & \left[\begin{array}{cc}
            1 & g + h  \\
            z & 1
        \end{array}\right] i^{-1} \left[\begin{array}{cc}
            1 & f  \\
            z & 1
        \end{array}\right] \\
        & = & \left( \left[\begin{array}{cc}
            1 & h  \\
            z & 1
        \end{array}\right] i^{-1} \left[\begin{array}{cc}
            1 & g  \\
            z & 1
        \end{array}\right]  \right) i^{-1} \left[\begin{array}{cc}
            1 & f  \\
            z & 1 
            \end{array}\right] \\
        & = &  \left[\begin{array}{cc}
            1 & h  \\
            z & 1
        \end{array}\right] i^{-1} \left( \left[\begin{array}{cc}
            1 & g  \\
            z & 1
        \end{array}\right] i^{-1} \left[\begin{array}{cc}
            1 & f  \\
            z & 1 
            \end{array}\right] \right) \\
        & = &  \left[\begin{array}{cc}
            1 & h  \\
            z & 1
        \end{array}\right] i^{-1} \left[\begin{array}{cc}
            1 & f+ g  \\
            z & 1
        \end{array}\right] \\
        & = & \left[\begin{array}{cc}
            1 & (f + g) + h  \\
            z & 1
        \end{array}\right]
    \end{eqnarray*}
\end{proof}

The version of the above result for linear categories, as well as unital categories, includes commutativity of the monoid structure. Now, as we see above, associativity of the monoid operation follows from associativity of composition of morphisms. Commutativity will not necessarily follow in this way. If the monoidal structures are symmetric and one of $X$ or $Y$ are commutative, then commutativity will follow since we are a the weakly unital case. In the weakly unital case, $f+g$ is defined as $\rowvec{f}{g} \Delta_X$. This can be seen to equal to our $\rowvec{g}{1}\rowvec{1}{f}$ since both morphisms expand to
\[\xymatrixcolsep{4em}\xymatrixrowsep{4em}\xymatrix{
    & X \ar[dl]_1 \ar@{.>}[d]_{\Delta} \ar[dr]^1 \\
    X \ar[d]_1 \ar@<0.5ex>[r]^-{\iota_1} & X \times X \ar@<0.5ex>[l]^-{\pi_1} \ar@<-0.5ex>[r]_-{\pi_2} \ar[d]_{1 \times f}  & X \ar@<-0.5ex>[l]_-{\iota_2} \ar[d]^f \\
    X \ar[dr]_g \ar@<0.5ex>[r]^-{\iota_1} & X \times X \ar@<0.5ex>[l]^-{\pi_1} \ar@<-0.5ex>[r]_-{\pi_2} \ar@{.>}[d]^{\rowvec{g}{1}}  & X \ar@<-0.5ex>[l]_-{\iota_2} \ar[dl]^1 \\
    & Y
}\]

We end off this section by exploring the relationship between addition of central morphisms and composition. First we make the following simple observation.

\begin{lemma}\label{lemma: i commutes with oplus and otimes}
    For any pair of morphisms $f,g$ in a prelinear category, $i(f \oplus g) = (f \otimes g)i$.
\end{lemma}

\begin{proof}
    Both morphisms have matrix representation $\left[ \begin{array}{cc}
        f & z \\
        z & g
    \end{array}\right]$
\end{proof}

We confirm that the statement of the next result makes sense in a partially linear category, thanks to Proposition \ref{Prop: central morphisms closed under comp}.

\begin{lemma}
    Suppose $f,g \colon X \to Y$ and $h \colon Y \to Z$ are left-central morphisms in a partially linear category. Then $h(f+g)=hf+hg$.
\end{lemma}

\begin{proof}
    $h(f+g)$ expands to 
    \[\xymatrix{
        & X \ar[dl]_{1} \ar[dr]^{f} \ar@{.>}[d] \\
        X & X \otimes Y \ar[d]^{i^{-1}} \ar[l] \ar[r] & Y \\
        X \ar[r] \ar[dr]_g & X \oplus Y \ar@{.>}[d] & Y \ar[l] \ar[dl]^1 \\
        & Y \ar[d]^h \\
       & Z
    }\]
    which can be seen to equal 
    \[\xymatrix{
        & X \ar[dl]_{1} \ar[dr]^{f} \ar@{.>}[d] \\
        X & X \otimes Y \ar[d]^{i^{-1}} \ar[l] \ar[r] & Y \\
        X \ar[r] \ar[d]_1 & X \oplus Y \ar[d]_{1 \oplus h} & Z \ar[l] \ar[d]^h \\
        X \ar[d]_g \ar[r] & X \oplus Z \ar[d]_{g \oplus 1} & Z \ar[l] \ar[d]^1  \\
       Y \ar[r] \ar[dr]_h & Y \oplus Z \ar@{.>}[d] & Z \ar[l] \ar[dl]^1 \\
        & Z
    }\]
    by considering the jointly epimorphic inclusions to $X \oplus Y$. By an application of Lemma \ref{lemma: i commutes with oplus and otimes}, we can see that this equals    $hf+hg$, which expands to 
    \[\xymatrix{
        & X \ar[dl]_{1} \ar[dr]^{f} \ar@{.>}[d] \\
        X \ar[d]_1 & X \otimes Y \ar[l] \ar[d]_{1 \otimes h} \ar[r] & Y \ar[d]^h \\
        X & X \otimes Z \ar[l] \ar[r] \ar[d]_{i^{-1}} & Z \\
        X \ar[r] \ar[d]_g & X \oplus Z \ar[d]_{g \oplus 1} & Z \ar[l] \ar[d]^1 \\
        Y \ar[r] \ar[dr]_h & Y \oplus Z \ar@{.>}[d] & Z \ar[l] \ar[dl]^1 \\
        & Z
    }\]
    
\end{proof}

By a dual argument we get that composition also distributes over addition from the right.  Collecting the above observations, we get that partially linear categories admit enrichment over monoids up to the central morphisms.

\begin{theorem}
    Let $\C$ be any partially linear category. Then
    \begin{itemize}
        \item the classes of left-central, right-central, and central morphisms are each closed under composition,
        \item and for any objects $X,Y \in \C$ the classes $LZ(X,Y), RZ(X,Y),$ and $Z(X,Y)$ each have monoid structure, and
        \item composition is bilinear with respect to this monoid structure.
    \end{itemize}
      
\end{theorem}

\section{Examples}

The originating example of a partially linear category is that of a linear category. Here all morphisms are central, and, of course, the various notions of centrality coincide. More generally, the weakly unital categories are precisely those partially linear categories where the product structure in question is a product. Here, again, all notions of centrality coincide.

We encounter various examples of partially linear categories by restricting the category $\Rel$ with objects sets and morphisms relations. $\Rel$ itself is linear. Restricting morphisms to the partial functions, we obtain a weakly unital category, $\PFn$, where the central morphisms are precisely the zero morphisms. So in this picture, disjoint union of objects is no longer a biproduct, but as a coproduct exhibits a partially linear structure. Restricting further we lose the fact that disjoint union is a coproduct, but retain a partially linear structure: 

\begin{example}
    Let $\PInj$ be the category with objects sets and morphism partial injections. The restricted coproduct here is a sum structure equipped with projections which are jointly monic, making $\PInj$ a partially linear category. Here all notions of centrality coincide, although trivially, since the central morphisms are precisely the zero morphisms.
\end{example}

We achieve something similar by restricting categories of vector spaces.

\begin{example}
    Let $\VecRShort$ be the category with objects finite-dimensional vector spaces over the reals and with morphisms linear maps that are short with respect to the Euclidean norm. The biproduct of the category of vector spaces restricts here to a partially linear structure that is neither coproduct nor product. Again, the central morphisms here are exactly the zero morphisms. 
\end{example}

\section{Acknowledgement}

This paper is based on a chapter of the PhD thesis of the first author, which itself grew out from a research discussion within the Mathematical Structures research programme of NITheCS involving the two authors as well as Michael Hoefnagel, Charles Msipha, and Emma Theart. We are grateful to their input on these discussions, particularly to Charles Msipha whose interest in monoidal categories inspired the development of the present research topic.


\begin{thebibliography}{1}



\bibitem{bourn} D.~Bourn, \emph{Intrinsic centrality and associated classifying
properties}, J.\ Algerba~\textbf{256} (2002), 126-145.

\bibitem{janelidze} Z.~Janelidze, \emph{Cover Relations on Categories}, Appl.\ Categ. Structures~\textbf{17} (2009), 351–-371.

\bibitem{lawvere} F.~Lawvere, \emph{Introduction to Linear Categories}, Lecture notes, 1992.

\bibitem{lawvere2} F.W.~Lawvere and S.H. Schanuel, \emph{Conceptual Mathematics: A First Introduction
to Categories}. Cambridge University Press, Cambridge, 2nd edition, 2009.

\bibitem{maclane}
S.~Mac Lane, \emph{Categories for the Working Mathematician}, second ed., Grad.\ Texts in Math., vol.~5, Springer, 1998.

\bibitem{martins-ferreira}
N.~Martins-Ferreira, \emph{Low-dimensional internal categorial structures in weakly
Malcev sesquicategories}, PhD thesis, (2008), University of Cape Town.

\end{thebibliography}
\end{document}